\documentclass{amsart}

\usepackage{graphicx}
\usepackage{color}

\newcommand{\ZZ}{\mathbb{Z}}


\newtheorem{theorem}{Theorem}[section]   
\newtheorem{lemma}[theorem]{Lemma}

\newtheorem{definition}[theorem]{Definition}

\newtheorem{remark}[theorem]{Remark}

\newtheorem{question}[theorem]{Question}


\begin{document}

\title[Non-hyperbolic automatic groups]{Non-hyperbolic automatic groups and groups acting on CAT(0) cube complex}

\author {Yoshiyuki Nakagawa}
\address{Department of Economics, Ryukoku University, Kyoto, 612-8577, Japan}
\email  {nakagawa@mail.ryukoku.ac.jp}

\author {Makoto Tamura}
\address{College of General Education,
         Osaka Sangyo University, Osaka, 574-8530, Japan}
\email  {mtamura@las.osaka-sandai.ac.jp}

\author {Yasushi Yamashita}

\address{Department of Information and Computer Sciences,
         Nara Women's University, Nara, 630-8506, Japan}
\email  {yamasita@ics.nara-wu.ac.jp}

\begin{abstract}
We discuss a problem posed by Gersten:
Is every automatic group which does not contain $\ZZ+\ZZ$ subgroup,
hyperbolic?
To study this question, we define the notion of ``$n$-tracks of length $n$'',
which is a structure like $\ZZ+\ZZ$,
and prove its existence in the non-hyperbolic automatic groups
with mild conditions.
As an application, we show that if a group
acts effectively, cellularly, properly discontinuously and
cocompactly on a CAT(0) cube complex and its quotient is "weakly special", 
then the above question is answered affirmatively.
\end{abstract}

\keywords{automatic group; hyperbolic group; CAT(0) cube complex.}

\subjclass[2000]{Primary 20F65; Secondary 20F67, 57M07}

\thanks{This work was supported by JSPS KAKENHI Grant Number 23540088}

\maketitle

\section{Introduction}
\label{sec:intro}

If a group $G$ has a finite $K(G, 1)$
and does not contain any Baumslag-Solitar groups, is $G$ hyperbolic?
(See \cite{Questions}.)
This is one of the most famous questions on hyperbolic groups.
Probably, many people expect that the answer is negative,
and it would be better to restrict our attention to some good class of groups.
In this paper we consider automatic groups.
If an automatic group $G$ does not contain any $\ZZ + \ZZ$ subgroups,
is $G$ hyperbolic?
Our problem is listed in \cite{openproblem} and attributed to Gersten.

Note that the class of all automatic groups contains
the class of all hyperbolic groups, all virtually abelian groups
and all geometrically finite hyperbolic groups \cite{MR1161694}.
A geometrically finite hyperbolic group is, in some sense,
similar to hyperbolic groups, but it might contain a $\ZZ+\ZZ$ subgroup.
Thus the class of automatic groups is a nice target
to consider the original question mentioned before.

Let us recall some related works very briefly.
If the group is the fundamental group of a closed $3$-manifold,
our question corresponds to the so-called ``weak hyperbolization''
of $3$-manifolds \cite{MR1362788}.
Also, D.~Wise proved that if the group satisfies
the small cancellation condition $B(6)$,
then the above question is answered affirmatively \cite{MR2053602}.
P.~Papasoglu proved that the Cayley graph of a group 
which is semihyperbolic but not hyperbolic
contains a subset quasi-isometric to $\ZZ + \ZZ$ \cite{MR1459144}. 

In this paper, we define the notion of ``$n$-tracks of length $n$'',
which suggests a clue of the existence of $\ZZ + \ZZ$ subgroup,
and show its existence in every non-hyperbolic automatic groups
with mild conditions.

As an application, we will show the next theorem:

{\bf Theorem\ \ref{thm:cube}\ } 
{\it Let $G$ be a group acting effectively, cellularly,
properly discontinuously and cocompactly
on a CAT(0) cube complex $X$.
If each hyperplane in $G\backslash X$ embeds
and does not self-osculate,
and $G$ is not word hyperbolic,
then $G$ contains $\ZZ + \ZZ$ as a subgroup.}

We remark that the assumption ``no self-osculating hyperplanes''
can be made weaker.
See section $4$ for the precise conditions we need.
See also Sageev and Wise \cite{MR2821442}.
They considered a similar problem for groups acting on CAT(0) cube-complex,
and introduced the notion of ``facing triple''.
We do not know the relation between our condition ``no direct self-contact''
in section 4
and theirs.

This paper is organized as follows.
In section $2$, we review definitions and some properties
of hyperbolic groups and automatic groups.
In section $3$, we introduce the notion of ``$n$-tracks''
and show its existence.
In section $4$, we review automatic structure of groups
which act effectively, cellularly, properly discontinuously
and cocompactly on CAT(0) cube complex due to
Niblo--Reeves~\cite{MR1604899},
and prove our theorem mentioned before.

\section{Hyperbolic groups and automatic groups}\label{sec:handa}

In this section, we briefly review definitions
and some properties of hyperbolic groups and automatic groups.
We refer to \cite{MR1161694} for the general theory.  

Let $G$ be a finitely generated group with a set of generators $A$.
In this paper, we will always assume that $A^{-1} = A$.
The \textit{Cayley graph} $\Gamma := \Gamma(G, A)$ of $G$
with respect to $A$ is a directed, labeled graph defined as follows:
the set of vertices is $G$ itself.
For $g, h \in G$, there is a directed edge $(g h)$,
source $g$ and target $h$, with label $a \in A$ if and only if $g a = h$.

Let $w$ be a word over $A$.
A \textit{prefix} of a word is any number of leading letters of that word.
We denote by $\ell(w)$ the word length of $w$
and by $w(t)$ the prefix of $w$ with length $t$.
The image of $w$ in $G$ by the natural projection is denoted by $\overline{w}$.
In this paper, we denote by $\overline{w(t_{1}, t_{2})}$
the subpath of the image of $w$ by the natural projection on $\Gamma$
from the vertex $\overline{w(t_{1})}$ to the vertex $\overline{w(t_{2})}$.
The Cayley graph $\Gamma$ is a metric space by its path metric.
We denote this metric by $d(g, h)$ for $g, h \in G$.

\begin{definition}
A geodesic space is said to be \textit{hyperbolic}
(in the sense of Gromov \cite{MR919829})
if there is a number $\delta > 0$ such that,
for any triangle $\triangle x y z$ with geodesic sides,
the distance from a point $u$ on one side to the union of the other two sides
is bounded by $\delta$.
A group $G$ with a set of generators $A$ is called word hyperbolic
if the Cayley graph $\Gamma$ is hyperbolic.
\end{definition}

It should be noted that the definition of a word hyperbolic group
does not depend on the choice of generators.
One of the most important properties of word hyperbolic groups
is the following theorem.

\begin{theorem}\label{thm:z2nothyperbolic}
If $G$ contains a $\ZZ + \ZZ$ subgroup, then $G$ can not be word hyperbolic.
(See \cite{MR1170363}.)
\end{theorem}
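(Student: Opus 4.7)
The plan is to derive a contradiction from simultaneously assuming that $G$ is word hyperbolic and that $G$ contains $\ZZ + \ZZ$. First I would pick generators $a, b$ of this subgroup, so $a$ and $b$ are commuting elements of infinite order. The key tool I would invoke is the classical fact that in a word hyperbolic group, the centralizer $C_G(g)$ of any infinite-order element $g$ is virtually cyclic. Given that fact, the contradiction is short: since $b$ commutes with $a$, the entire subgroup $\langle a, b\rangle \cong \ZZ + \ZZ$ lies in $C_G(a)$; but a virtually cyclic group cannot contain $\ZZ + \ZZ$, because any finite-index subgroup of $\ZZ + \ZZ$ still has rank two and is therefore not cyclic.

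If I had to prove the centralizer statement from scratch rather than cite it, I would use that an infinite-order element $g$ in a hyperbolic group admits a quasi-axis $\gamma_g$ in $\Gamma$ on which $\langle g\rangle$ acts cocompactly by translation, and that this quasi-axis is unique up to bounded Hausdorff distance (via the Morse lemma for quasi-geodesics in hyperbolic spaces). Any $h \in C_G(g)$ conjugates $\gamma_g$ to another quasi-axis for the same element $g$, so $h$ preserves $\gamma_g$ within a uniformly bounded neighbourhood. Assigning to each such $h$ its signed translation length along $\gamma_g$ then defines a homomorphism $C_G(g) \to \ZZ$ whose kernel is finite (the elements moving $\gamma_g$ within bounded distance and with zero translation form a set of bounded diameter in a locally finite graph), which yields virtual cyclicity.

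The only genuine obstacle is this centralizer statement; once it is in hand the theorem follows in one line. Since the paper simply cites \cite{MR1170363}, I would do the same and present the contradiction paragraph as the entire proof, noting along the way that one can replace ``virtually cyclic centralizer'' with the equivalent and more geometric statement that there is no quasi-isometrically embedded flat $\ZZ + \ZZ$ in a hyperbolic Cayley graph, which also immediately finishes the argument since abelian subgroups of finitely generated groups are always quasi-isometrically embedded in any abelian ambient subgroup.
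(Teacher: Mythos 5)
Your main argument is correct and is essentially what the paper relies on: the paper gives no proof at all, merely citing the notes of Alonso et al., and the centralizer argument you present (an infinite-order element of a hyperbolic group has virtually cyclic centralizer, so $\langle a,b\rangle\cong\ZZ+\ZZ\subset C_G(a)$ is impossible) is exactly the standard proof found in that reference. Two small cautions on your supplementary remarks. First, in your sketch of the centralizer fact, the signed translation length along a quasi-axis is only coarsely additive, so it defines a quasimorphism rather than a genuine homomorphism $C_G(g)\to\ZZ$; the usual repair is to use the action of $C_G(g)$ on the two fixed points of $g$ in the Gromov boundary and the fact that the stabilizer of such a pair is virtually cyclic, or to work with stable translation lengths. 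Second, your closing aside is false as stated: abelian subgroups of finitely generated groups need \emph{not} be quasi-isometrically embedded (e.g.\ $\ZZ^2$ in $\ZZ^2\rtimes\ZZ$ with an Anosov action, or $\ZZ$ in $BS(1,2)$, are exponentially distorted), so the ``no quasi-flat in a hyperbolic Cayley graph'' formulation cannot be invoked without first proving that a $\ZZ+\ZZ$ subgroup of a hyperbolic group would be undistorted --- a fact usually deduced from the centralizer argument rather than assumed. Since you present these only as asides and your primary route is the citation plus the one-line contradiction, the proof as proposed stands.
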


Next, we recall the concept of automatic structure.
Again, let $G$ be a finitely generated group with a set of generators $A$.
We denote by $\varepsilon$ the identity element of $G$.
A special letter $\$ \not\in A$ is used
to define the automatic structure of the group.
A finite state automaton $M$ over an alphabet $A$ is a machine
that determines ``accept'' or ``reject'' for a given word over $A$.
See \cite{MR1161694} for detail.
The language given by all the accepted words of a finite state automaton $M$
is denoted by $L(M)$.

\begin{definition}
\textit{An automatic structure} on $G$ consists of
a finite state automaton $W$ over $A$
and finite state automata $M_{x}$ over
$\left(A \cup \{\$\}\right) \times \left(A \cup \{\$\} \right)$,
for $x \in A \cup \{\varepsilon\}$, satisfying the following conditions:
\begin{enumerate}
\item The natural projection from $L(W)$ to $G$ is surjective.
\item For $x \in A \cup \{ \varepsilon \}$,
  we have $(w, w') \in L(M_x)$ if and only if
  $\overline{w \, x\mathstrut} = \overline{w'\mathstrut}$
  and both $w$ and $w'$ are elements of $L(W)$.
\end{enumerate}
For $M_{x}$ ($x \in A \cup \{\varepsilon\}$),
we think of the input $(w, w')$,
where $w = x_1 \, x_2 \cdots x_n$ and $w' = x_1' \, x_2' \cdots x_m'$
($x_i, x_j' \in A$, $i = 1, \ldots, n$ and $j= 1, \ldots, m$), 
as the string $(x_1, x_1') \, (x_2, x_2') \cdots$ defined over
$\left( A \cup \{ \$ \} \right) \times \left( A \cup \{ \$ \} \right)$.
If the word length of $w$ is not equal to $w'$,
say $\ell(w) < \ell(w')$,
we use the pudding letter $\$ $ and the input for the automaton is
$(x_1, x_1') \, (x_2, x_2') \cdots (x_n, x_n') (\$, x_{n+1}') \cdots (\$, x_m')$.
\end{definition}

$W$ is called the {\em word acceptor},
and each $M_{x}$ is called a \textit{compare automaton}
for the automatic structure.
An automatic group is one that admits an automatic structure.

\begin{lemma}[Lemma 2.3.2 of \cite{MR1161694}] \label{lem:kfellow}
If $G$ has an automatic structure,
there is a constant $k$ with the following property:
If $(w_1, w_2)$ is accepted by one of the automata
$M_x$, for $x \in A \cup \{\varepsilon\}$,
then $d(\overline{w_1(t)} ,\overline{w_2(t)}) < k$ 
for any integer $t\geq 0$.
\end{lemma}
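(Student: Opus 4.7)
The plan is to exploit the fact that each compare automaton $M_{x}$ has only finitely many states, and that from any state reachable on the accepted input one can detour quickly to an accept state. Concretely, I would first dispose of ``dead'' states (states from which no accept state is reachable) by restricting attention to the trimmed automaton; since the given pair $(w_1,w_2)$ is accepted, every state actually visited while reading $(w_1,w_2)$ is a live state. For each live state $q$ of each $M_x$ (with $x\in A\cup\{\varepsilon\}$), choose once and for all a shortest pair of suffixes $(u_q,v_q)$ over $(A\cup\{\$\})\times(A\cup\{\$\})$ that drives $M_x$ from $q$ to an accept state. Since the total number of such states is finite, the maximum length
$$
N \;:=\; \max_{x,\,q}\max\bigl\{\ell(u_q),\,\ell(v_q)\bigr\}
$$
is finite.

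Now fix an accepted pair $(w_1,w_2)\in L(M_x)$ and an integer $t\ge 0$. Reading the padded string of $(w_1,w_2)$ up to time $t$ brings $M_x$ into some live state $q$. Let $(u,v)$ be the chosen short completion from $q$ (after stripping the padding symbols $\$$ to recover honest words over $A$). By construction, feeding $M_x$ first the length-$t$ prefix of $(w_1,w_2)$ and then the completion produces an accepted input, so the pair $(w_1(t)\,u,\ w_2(t)\,v)$ is accepted by $M_x$. Condition (2) in the definition of automatic structure then gives
$$
\overline{w_1(t)}\cdot \overline{u}\cdot \overline{x} \;=\; \overline{w_2(t)}\cdot \overline{v}
$$
in $G$, and hence
$$
\overline{w_1(t)}^{-1}\,\overline{w_2(t)} \;=\; \overline{u}\cdot\overline{x}\cdot\overline{v}^{-1}.
$$
The right-hand side is a product of at most $2N+1$ generators, so $d(\overline{w_1(t)},\overline{w_2(t)})\le 2N+1$. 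Setting $k:=2N+2$ finishes the argument.

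The main thing to be careful about is the bookkeeping of the padding symbol $\$$. When $\ell(w_1)\neq\ell(w_2)$, one of the coordinates in the input to $M_x$ eventually becomes $\$$, and the same phenomenon can occur in the chosen completion $(u_q,v_q)$; one must verify that after erasing $\$$'s the resulting honest words in $A^{\ast}$ still represent the intended group elements and that condition (2) indeed yields the displayed equation above. Beyond this formal care, no real obstacle arises: the argument is a direct pigeonhole on the finite state set of the compare automata, and the constant $k$ that appears depends only on the automatic structure (in particular, independently of $x$, of the accepted pair, and of $t$).
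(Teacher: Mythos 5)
Your argument is correct and is essentially the standard proof of this lemma: the paper itself offers no proof, citing Lemma 2.3.2 of \cite{MR1161694}, and the argument given there is exactly your pigeonhole on the finite state set of the compare automata, completing from a live state by a short accepted suffix pair and then applying condition (2) of the definition to bound $d(\overline{w_1(t)},\overline{w_2(t)})$ by twice the number of states plus one. Your caution about the padding symbol is also handled automatically, since every string accepted by $M_x$ is a valid padded encoding of a pair of words of $L(W)$, so the completed string really does represent $(w_1(t)u, w_2(t)v)$ as you claim.
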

Such a number $k$ is called a {\em fellow traveler's constant}
for the structure.

We define some properties of automatic structures which will be assumed later.

\begin{definition}
Let $W$ be the word accepter of an automatic structure of a group $G$
with generators $A$.
\begin{enumerate}
\item The automatic structure is \textit{prefix closed} if, for every
  $w \in L\left( W \right)$, any prefix $w\left( t \right) \, \left( 0
  \leq t \leq \ell\left( w \right) \right)$ is an element of
  $L\left( W \right)$.
\item The automatic structure has the \textit{uniqueness property} if
  the natural projection from $L\left( W \right)$ to $G$ is injective,
  thus bijective.
\item The group is \textit{weakly geodesically automatic} if any word
  $w \in L\left( W \right)$ is a geodesic with respect to path metric
  of $\Gamma$.
\item The group is \textit{strongly geodesically automatic} if
  $L\left( W \right)$ is equal to the set of all geodesic words.
\end{enumerate}
\end{definition}

In this paper, we investigate the relation between the word hyperbolicity
and the automaticity for finitely generated groups.
Here is the basic fact about the relation.

\begin{theorem}[Papasoglu \cite{MR1346209}]
Any finitely presented group is word hyperbolic
if and only if it is strongly geodesically automatic.
\end{theorem}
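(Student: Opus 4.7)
The plan is to prove the two implications separately. The direction ``hyperbolic $\Rightarrow$ strongly geodesically automatic'' is classical, due to Cannon and Gromov, while the converse is the main content of Papasoglu's paper and is the harder half.

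For the forward direction, I would assume the Cayley graph $\Gamma$ is $\delta$-hyperbolic and let $L$ be the set of all geodesic words over $A$. The key tool is Cannon's observation that the \emph{cone type} of a vertex $g$ (the set of geodesic continuations of a shortest word representing $g$) is determined by the isometry type of a ball of bounded radius around $g$, and therefore takes only finitely many values. This lets me construct a finite state word acceptor $W$ with $L(W)=L$ using cone types as states. The compare automata $M_x$ are then built by exploiting the $2\delta$-fellow traveler property for geodesics with common endpoints: the difference $\overline{w_1(t)}^{-1}\,\overline{w_2(t)}$ remains in a fixed finite ball, so the comparison is finite-state.

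For the converse, suppose $L(W)$ equals the full set of geodesic words. Lemma \ref{lem:kfellow} furnishes a constant $k$ such that any two geodesics between the same pair of endpoints $k$-fellow travel. I would then invoke Papasoglu's key geometric theorem: a Cayley graph in which every pair of geodesics sharing endpoints $k$-fellow travels (``thin bigons'') must be hyperbolic. To prove this, assume for contradiction that geodesic triangles are not uniformly slim; choose a triangle $\triangle xyz$ with a point $u$ on side $[xy]$ at large distance from $[xz]\cup[yz]$. By sliding $u$ and swapping initial geodesic subwords for alternative geodesics of the same length, one constructs a geodesic bigon of width exceeding $k$, contradicting the fellow traveler bound. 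This promotes bigon-thinness to the slim-triangle condition, establishing hyperbolicity.

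The main obstacle is this converse direction. The naive idea of decomposing a geodesic triangle into two bigons meeting at a common midpoint breaks down because no single point lies simultaneously near all three sides. Papasoglu's actual argument is an iterative divide-and-conquer on long geodesics which crucially relies on the freedom to replace an initial segment of a geodesic by \emph{any} other geodesic of the same length, not just by a chosen normal form; this is precisely why \textbf{strong} (rather than merely weak) geodesic automaticity is the right hypothesis. I expect the proof to hinge on writing down this replacement scheme carefully, controlling the constants only in terms of $k$ and $|A|$.
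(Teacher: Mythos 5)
The paper does not prove this statement at all: it is quoted from Papasoglu \cite{MR1346209}, and the only thing the authors extract from its proof is Lemma~\ref{lem:mthickbigon}. So there is no in-paper argument to compare against, and your proposal must be judged against the literature. As an outline it is accurate and follows the standard route: the direction ``hyperbolic $\Rightarrow$ strongly geodesically automatic'' is the classical Cannon--Gromov argument (finiteness of cone types gives regularity of the full geodesic language, and the $2\delta$-fellow-traveling of geodesics with endpoints at distance at most $1$ keeps the comparator automata finite-state), while the converse reduces, via Lemma~\ref{lem:kfellow} applied to $M_{\varepsilon}$ --- which applies to \emph{every} geodesic bigon precisely because $L(W)$ is the full geodesic language --- to Papasoglu's geometric theorem that uniformly thin geodesic bigons in a Cayley graph force hyperbolicity. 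The one place where your write-up is weaker than you concede is the proof of that geometric theorem: the contradiction scheme you sketch (``slide $u$ and swap initial geodesic subwords to manufacture a wide bigon'') is not yet an argument, since the whole difficulty is that a wide triangle does not obviously contain a wide geodesic bigon; Papasoglu's actual proof is a delicate subdivision/induction argument exploiting the discreteness and uniform local finiteness of the Cayley graph, and any attempt to shortcut it along the lines you describe would have to confront exactly the obstruction you name. Since you explicitly defer to Papasoglu for that step, and the paper itself simply cites him for the whole theorem, your proposal is a sound citation-level sketch rather than a self-contained proof, which is the same level of detail the paper provides.
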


In the proof of the above theorem, the following lemma was proved.

\begin{lemma}\label{lem:mthickbigon}
If a group is not hyperbolic and weakly geodesically automatic, 
then for any large $M > 0$,
there exists a pair of geodesic words $(b_1, b_2)$  
such that $\overline{b_1 \mathstrut} = \overline{b_2 \mathstrut}$
and $d\left(\overline{b_1(r)}, \overline{b_2(r)}\right) > M$ for some $r$.
\end{lemma}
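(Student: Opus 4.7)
The plan is to prove the lemma by contraposition, reducing it to Papasoglu's thin bigon criterion for hyperbolicity. The negation of the desired conclusion says: there exists some $M_0$ such that for every pair of geodesic words $(b_1, b_2)$ with $\overline{b_1\mathstrut} = \overline{b_2\mathstrut}$ and every $r$, we have $d(\overline{b_1(r)}, \overline{b_2(r)}) \leq M_0$. I would show that this forces $G$ to be word hyperbolic, contradicting the hypothesis.

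First, I would observe that the assumed bound is exactly the statement that every geodesic bigon in the Cayley graph $\Gamma(G, A)$ is uniformly $M_0$-thin: any two geodesics joining the same pair of vertices $\overline{\varepsilon}$ and $\overline{b_1\mathstrut} = \overline{b_2\mathstrut}$ remain within distance $M_0$ of one another at matching parameters. Next, I would invoke Papasoglu's bigon criterion from \cite{MR1346209}: a finitely presented group whose Cayley graph has the uniform thin bigon property is word hyperbolic. Because $G$ is automatic (in particular weakly geodesically automatic), it is finitely presented by the standard theory in \cite{MR1161694}, so the hypotheses of the criterion are satisfied, giving that $G$ is hyperbolic. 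This contradicts the assumption of non-hyperbolicity, completing the contrapositive.

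The hard part is of course Papasoglu's bigon criterion itself, which is the substantive content of \cite{MR1346209}: from uniformly thin bigons one recursively builds controlled geodesic triangles, eventually bounding their insize and thereby establishing $\delta$-hyperbolicity in the sense of Gromov. Since this is cited as an established result in the excerpt's Theorem \ref{thm:z2nothyperbolic}'s companion, I treat it as a black box. The role of the weakly geodesically automatic hypothesis in the lemma is solely to guarantee finite presentability of $G$ (so that the bigon criterion applies) and to place us in the setting of the later sections; the geodesic words $b_1, b_2$ produced need not lie in $L(W)$.
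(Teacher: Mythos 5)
Your contrapositive reduction to Papasoglu's thin-bigon criterion is correct, and it is essentially the paper's own justification: the paper offers no independent argument but simply notes that this lemma is extracted from the proof of Papasoglu's theorem in \cite{MR1346209}, which is exactly the black box you invoke. The only cosmetic remark is that by homogeneity of the Cayley graph the bigons based at $\varepsilon$ already account for all geodesic bigons, and Papasoglu's criterion for graphs does not in fact need finite presentability, so your appeal to it is harmless but not essential.
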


See Fig.~\ref{fig:mthick}.
We call $(b_1, b_2)$ in Fig.~\ref{fig:mthick}
\textit{$M$-thick bigon} with side $b_1, b_2$.

\section{Existence of $n$-tracks in non-hyperbolic automatic groups}
\label{sec:track}

Let $G$ be an automatic group with automatic structure
$(A, W, \{ M_x \}_{x \in A \cup \{ \varepsilon \}})$
where $A$ is the set of generators with $A^{-1} = A$,
$W$ the word acceptor
and $M_{x}$ the compare automaton for $x \in A \cup \{ \varepsilon \}$.
The following is the key concept in this paper.

\begin{definition}\label{def:track}
Let $T = \{t_1, t_2, \ldots, t_n\}$
be a set of mutually disjoint $n$ paths of length $n$ in $\Gamma$.  
We call $T$ $n$-tracks of length $n$
if there exist $2 \, n$ words
$w_1, w_1', w_2, w_2',$ $\ldots, w_n, w_n'$ of $L(W)$ 
and a positive integer $r$ such
that $(w_i', w_{i+1})$ is accepted by some compare automaton
for $i = 1, 2,\ldots , n-1$,
and that $t_i = \overline{w_i(r, r+n) \mathstrut}
= \overline{w_i'(r, r+n) \mathstrut}$ for $i=1, 2, \ldots, n$.
See Fig.~\ref{fig:deftrack}.
\end{definition}

\begin{figure}[ht]
  \centering

\begin{picture}(0,0)%
\includegraphics{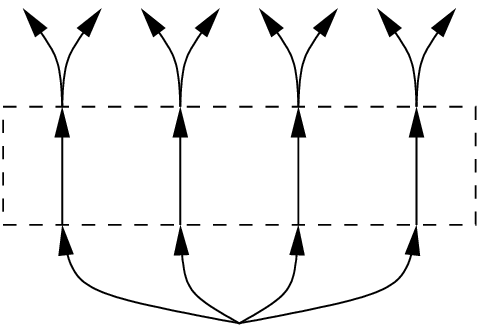}%
\end{picture}%
\setlength{\unitlength}{4144sp}%
\begingroup\makeatletter\ifx\SetFigFont\undefined%
\gdef\SetFigFont#1#2#3#4#5{%
  \reset@font\fontsize{#1}{#2pt}%
  \fontfamily{#3}\fontseries{#4}\fontshape{#5}%
  \selectfont}%
\fi\endgroup%
\begin{picture}(2187,1810)(256,-1151)
\put(271,524){\makebox(0,0)[lb]{\smash{{\SetFigFont{10}{12.0}{\familydefault}{\mddefault}{\updefault}{\color[rgb]{0,0,0}$w_1$}%
}}}}
\put(586,524){\makebox(0,0)[lb]{\smash{{\SetFigFont{10}{12.0}{\familydefault}{\mddefault}{\updefault}{\color[rgb]{0,0,0}$w_1'$}%
}}}}
\put(811,524){\makebox(0,0)[lb]{\smash{{\SetFigFont{10}{12.0}{\familydefault}{\mddefault}{\updefault}{\color[rgb]{0,0,0}$w_2$}%
}}}}
\put(1126,524){\makebox(0,0)[lb]{\smash{{\SetFigFont{10}{12.0}{\familydefault}{\mddefault}{\updefault}{\color[rgb]{0,0,0}$w_2'$}%
}}}}
\put(1351,524){\makebox(0,0)[lb]{\smash{{\SetFigFont{10}{12.0}{\familydefault}{\mddefault}{\updefault}{\color[rgb]{0,0,0}$w_3$}%
}}}}
\put(1666,524){\makebox(0,0)[lb]{\smash{{\SetFigFont{10}{12.0}{\familydefault}{\mddefault}{\updefault}{\color[rgb]{0,0,0}$w_3'$}%
}}}}
\put(1891,524){\makebox(0,0)[lb]{\smash{{\SetFigFont{10}{12.0}{\familydefault}{\mddefault}{\updefault}{\color[rgb]{0,0,0}$w_4$}%
}}}}
\put(2206,524){\makebox(0,0)[lb]{\smash{{\SetFigFont{10}{12.0}{\familydefault}{\mddefault}{\updefault}{\color[rgb]{0,0,0}$w_4'$}%
}}}}
\put(586,-331){\makebox(0,0)[lb]{\smash{{\SetFigFont{10}{12.0}{\familydefault}{\mddefault}{\updefault}{\color[rgb]{0,0,0}$t_1$}%
}}}}
\put(1126,-331){\makebox(0,0)[lb]{\smash{{\SetFigFont{10}{12.0}{\familydefault}{\mddefault}{\updefault}{\color[rgb]{0,0,0}$t_2$}%
}}}}
\put(1666,-331){\makebox(0,0)[lb]{\smash{{\SetFigFont{10}{12.0}{\familydefault}{\mddefault}{\updefault}{\color[rgb]{0,0,0}$t_3$}%
}}}}
\put(2206,-331){\makebox(0,0)[lb]{\smash{{\SetFigFont{10}{12.0}{\familydefault}{\mddefault}{\updefault}{\color[rgb]{0,0,0}$t_4$}%
}}}}
\put(1306,-1096){\makebox(0,0)[lb]{\smash{{\SetFigFont{10}{12.0}{\familydefault}{\mddefault}{\updefault}{\color[rgb]{0,0,0}$e$}%
}}}}
\end{picture}%

  \caption{$4$-track $T=\{t_1, t_2, t_3, t_4\}$ and its related paths}
  \label{fig:deftrack}
\end{figure}

The purpose of this section is to prove the following theorem.

\begin{theorem}\label{thm:track}
Let $G$ be a weakly geodesically automatic group
whose automatic structure $(A, W, \{ M_x \}_{x \in A \cup \{\varepsilon\}})$
is prefix closed and has the uniqueness property.
If $G$ is not hyperbolic,
then it contains $n$-tracks of length $n$ for any $n > 0$.
\end{theorem}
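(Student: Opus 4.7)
The plan is to extract an $n$-track from a very thick geodesic bigon. Fix $n$, let $k$ be the fellow-traveler constant from Lemma~\ref{lem:kfellow}, and pick $M$ much larger than $n(2n+2k+1)+4n$. By Lemma~\ref{lem:mthickbigon} I obtain geodesic words $b_1,b_2$ with $\overline{b_1}=\overline{b_2}$ and $d(\overline{b_1(r)},\overline{b_2(r)})>M$ at some $r$; let $T=\ell(b_1)$. The central observation---immediate from prefix-closure and uniqueness---is that any two accepted words passing through a common vertex at the same position agree letter-for-letter on their entire prefix up to that position; in particular they share the subpath $\overline{w(r,r+n)}$ whenever their height-$(r+n)$ vertices coincide.

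I lift the bigon to an $L(W)$-chain above level $r+n$: enumerate the vertices on the walk $\overline{b_1(r+n)}\to\cdots\to\overline{b_1(T)}=\overline{b_2(T)}\to\cdots\to\overline{b_2(r+n)}$, assign to each its unique $L(W)$-representative (uniqueness), and list these as $w^{(0)},\ldots,w^{(N)}$. Consecutive pairs lie in some compare automaton and $k$-fellow-travel (Lemma~\ref{lem:kfellow}), and weak geodesicity gives $\ell(w^{(j)})\geq r+n$. I partition $\{0,\ldots,N\}$ into maximal blocks on which the apex $v_j:=\overline{w^{(j)}(r+n)}$ is constant, producing stretches $S_1,\ldots,S_K$ with apexes $v^{(1)},\ldots,v^{(K)}$; by the rigidity observation each stretch yields a single candidate track.

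At level $r$, $\overline{w^{(0)}(r)}$ and $\overline{w^{(N)}(r)}$ lie within $2n$ of $\overline{b_1(r)}$ and $\overline{b_2(r)}$ respectively (each is the level-$r$ point of a geodesic of length $\geq r+n$ ending at a bigon vertex), hence at distance exceeding $M-4n$, while consecutive chain vertices at level $r$ move by at most $k$. A level-set pigeonhole on $j\mapsto d(\overline{w^{(0)}(r)},\overline{w^{(j)}(r)})$ then extracts indices $j_1<\cdots<j_n$ whose level-$r$ start points are pairwise at distance more than $2n$, so the corresponding tracks $t_l:=\overline{w^{(j_l)}(r,r+n)}$ are vertex-disjoint (each lies in the $n$-ball about its start). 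I take $w_l:=w^{(j_l)}$ and select $w_l'$ to be an $L(W)$-word descending from the same apex $v^{(s_l)}$ (so the rigidity observation gives $w_l,w_l'$ the same track $t_l$) whose Cayley-graph endpoint is one generator away from the endpoint of $w_{l+1}$, so that $(w_l',w_{l+1})$ is accepted by the corresponding $M_x$.

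The main obstacle is this final selection. Vertex-disjointness forces the apexes $v^{(s_l)}$ to be well-separated along the chain, whereas the compare-automaton relation on $(w_l',w_{l+1})$ demands that the top endpoints of $w_l'$ and $w_{l+1}$ be one generator apart in $\Gamma$; these constraints pull in opposite directions. I expect the resolution to use the rigidity principle once more, exploiting that the $L(W)$-subtree rooted at each apex $v^{(s_l)}$ contains at minimum every bigon vertex above it in the chain, so that a careful pairing of descendants---one drawn from the subtree of each selected apex---yields Cayley-graph endpoints that are adjacent and closes the $n$-track.
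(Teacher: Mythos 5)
Your construction up to the selection step essentially parallels the paper's: the chain of unique $L(W)$-representatives of the bigon vertices, the fact that consecutive pairs are accepted by compare automata and $k$-fellow travel, and the prefix-rigidity observation from prefix-closure plus uniqueness are all present in the paper (there as the family $\{p_i\}$ of accepted words to the vertices of one side of the bigon). The gap is the final selection, and it is not a missing detail that a cleverer choice of $w_l'$ could supply --- as set up, it cannot be completed. By Lemma~\ref{lem:kfellow}, if $(w_l',w_{l+1})$ is accepted by a compare automaton then $d\bigl(\overline{w_l'(r)},\overline{w_{l+1}(r)}\bigr) < k$; and since Definition~\ref{def:track} requires $t_l=\overline{w_l'(r,r+n)}$, the vertex $\overline{w_l'(r)}$ is exactly the initial vertex of $t_l$. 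Hence in \emph{any} $n$-track the initial vertices of consecutive tracks lie within the fixed constant $k$ of one another. Your pigeonhole deliberately selects indices $j_1<\cdots<j_n$ whose level-$r$ points are pairwise more than $2n$ apart, so as soon as $n\geq k/2$ no choice of $w_l'$ (from the apex subtree or anywhere else) can make $(w_l',w_{l+1})$ accepted: the ``opposite pulls'' you identify are not a tension to be negotiated but an outright contradiction, and the theorem must be proved for all $n$.

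The paper resolves the tension in the opposite way: it keeps the tracks \emph{consecutive} along the chain, so the compare-automaton condition is automatic ($t^i$ and $t^{i+1}$ are subpaths of $p_m$ and $p_{m+1}$, whose endpoints are adjacent vertices on the bigon side), and it obtains disjointness not from metric separation of the starting points but from the tree structure of $L(W)$: two intersecting tracks must share a common initial segment and then branch, so it suffices to find a branch-free run. This is done by choosing the level $R=r+Jn$ at which the shortest connecting path $\gamma_J$ in the auxiliary graph $\Lambda_J$ is longest, showing that the union of the candidate tracks one level up has at most $n$ branch points, and noting that with $M=2k(n+1)^2$ there are more than $(n+1)^2$ candidates, so some $n$ consecutive tracks are mutually disjoint. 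To repair your argument you would have to replace the ``spread the level-$r$ starts by more than $2n$'' step with a disjointness mechanism of this kind; the earlier part of your setup can stand.
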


\begin{proof}
Let $k$ be a fellow traveler's constant for the automatic structure
and set $M = 2 \, k \, (n+1)^{2}$.
By Lemma~\ref{lem:mthickbigon}, there exists a $M$-thick bigon in $\Gamma$.
We denote by $b_1$ and $b_2$ the two sides of this $M$-thick bigon,
and by $e$ and $g$ the two end points.
Without loss of generality,
we may assume that $e$ is the identity vertex $\varepsilon$.

Since the automatic structure is weakly geodesically automatic,
there exists a word $p_0$ in $L(W)$
whose image $\overline{p_0}$ in $\Gamma$ is a geodesic from $e$ to $g$.
Then, at least one of two bigons $(p_0, b_1)$ and $(p_0, b_2)$ is $(M/2)$-thick.
We denote this bigon by $B = (p_0, b)$.
(See Fig.~\ref{fig:mthick})

\begin{figure}[htbp]
  \centering

\begin{picture}(0,0)%
\includegraphics{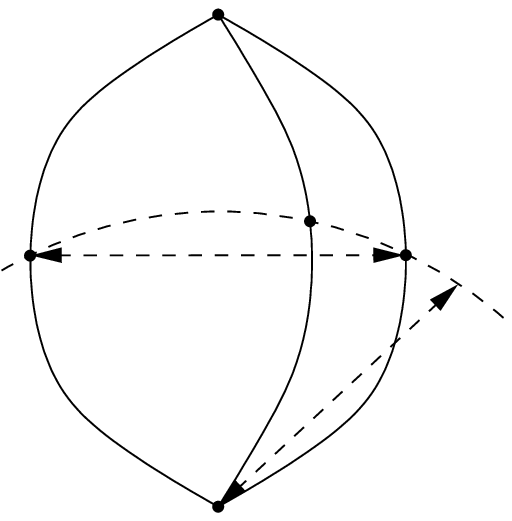}%
\end{picture}%
\setlength{\unitlength}{4144sp}%
\begingroup\makeatletter\ifx\SetFigFont\undefined%
\gdef\SetFigFont#1#2#3#4#5{%
  \reset@font\fontsize{#1}{#2pt}%
  \fontfamily{#3}\fontseries{#4}\fontshape{#5}%
  \selectfont}%
\fi\endgroup%
\begin{picture}(2314,2767)(353,-1646)
\put(1801,-48){\makebox(0,0)[lb]{\smash{{\SetFigFont{10}{12.0}{\familydefault}{\mddefault}{\updefault}{\color[rgb]{0,0,0}$p_0(r)$}%
}}}}
\put(451,-241){\makebox(0,0)[rb]{\smash{{\SetFigFont{10}{12.0}{\familydefault}{\mddefault}{\updefault}{\color[rgb]{0,0,0}$b_1(r)$}%
}}}}
\put(631,389){\makebox(0,0)[rb]{\smash{{\SetFigFont{10}{12.0}{\familydefault}{\mddefault}{\updefault}{\color[rgb]{0,0,0}$b_1$}%
}}}}
\put(2071,389){\makebox(0,0)[lb]{\smash{{\SetFigFont{10}{12.0}{\familydefault}{\mddefault}{\updefault}{\color[rgb]{0,0,0}$b_2$}%
}}}}
\put(1621,209){\makebox(0,0)[rb]{\smash{{\SetFigFont{10}{12.0}{\familydefault}{\mddefault}{\updefault}{\color[rgb]{0,0,0}$p_0$}%
}}}}
\put(1216,-601){\makebox(0,0)[lb]{\smash{{\SetFigFont{10}{12.0}{\familydefault}{\mddefault}{\updefault}{\color[rgb]{0,0,0}$>M$}%
}}}}
\put(2296,-241){\makebox(0,0)[lb]{\smash{{\SetFigFont{10}{12.0}{\familydefault}{\mddefault}{\updefault}{\color[rgb]{0,0,0}$b_2(r)$}%
}}}}
\put(2026,-736){\makebox(0,0)[rb]{\smash{{\SetFigFont{10}{12.0}{\familydefault}{\mddefault}{\updefault}{\color[rgb]{0,0,0}$r$}%
}}}}
\put(676,-421){\makebox(0,0)[lb]{\smash{{\SetFigFont{10}{12.0}{\familydefault}{\mddefault}{\updefault}{\color[rgb]{0,0,0}$d(b_1(r),b_2(r))$}%
}}}}
\put(1351,-1591){\makebox(0,0)[b]{\smash{{\SetFigFont{10}{12.0}{\familydefault}{\mddefault}{\updefault}{\color[rgb]{0,0,0}$e$}%
}}}}
\put(1351,974){\makebox(0,0)[b]{\smash{{\SetFigFont{10}{12.0}{\familydefault}{\mddefault}{\updefault}{\color[rgb]{0,0,0}$g$}%
}}}}
\end{picture}%

  \caption{$M$-thick bigon}
  \label{fig:mthick}
\end{figure}

By definition, we can find paths $p_i \in L(W)$ from $e$
to $\overline{b(\ell(b) - i)}$ for $i = 0, 1, 2, \ldots, \ell(b)$. 
Write $P = \{ p_i \}_{i = 0}^{\ell(b)}$.
Since the automatic structure is weakly geodesically automatic,
each $p_i \in P$ is geodesic
and $\ell(p_i) = \ell(b(\ell(b)-i)) = \ell(b) - i$. 

We claim that the intersection of two distinct paths $p_i$ and $p_j$
($i \neq j$) of $P$ is their common prefix
(possibly the identity vertex $e$) only.
To see this, suppose that $\overline{p_i}$ and $\overline{p_j}$ in $\Gamma$
have an intersection $\overline{p_i(t_i)} = \overline{p_j(t_j)}$ in $G$. 
Since the automatic structure is prefix closed,
both prefixes $p_i(t_i)$ of $p_i$ and $p_j(t_j)$ of $p_j$ are in $L(W)$.
Then, uniqueness property implies that $p_i(t_i) = p_j(t_j)$
(thus $t_i = t_j$) and the claim is proved.

Since $B = (p_0, b)$ is $(M/2)$-thick bigon,
there exists a number $r$ such that $d(p_0(r), b(r)) \geq M/2$.
Let $\Lambda_j$ \, ($j = 0,1,2, \ldots$) be a graph 
whose vertex set $V_j$ is the subset
$\left\{\overline{p_i(r+jn)} \, \mathstrut
\vert \, \ell(p_i) \geq r+jn \right\}$ of $V(\Gamma)$, 
and whose edge set $E_j$ is
$\left\{ \left( \overline{p_i(r+jn)},\, \overline{p_{i+1}(r+jn)} \right) \,
\mathstrut \vert \, i = 0, 1, 2, \ldots , \ell(b)-(r+jn)-1 \right\}$. 
Let $\gamma_j$ be a shortest simple path in $\Lambda_j$
from $\overline{p_0(r+jn)}$ to $\overline{p_{\ell(b)-(r+jn)}(r+jn)}$,   
and $\gamma_J$ be the longest path
in $\left\{ \gamma_0, \gamma_1, \gamma_2, \ldots \right\}$.
We set $R = r+Jn$.  

Let $\gamma_{J+1}^0, \gamma_{J+1}^1,  \gamma_{J+1}^2, \ldots ,
\gamma_{J+1}^{\ell(\gamma_{J+1})}$ be the geodesic paths in $L(W)$ 
from $e$ to the vertices of $\gamma_{J+1}$. 
Note that $\gamma_{J+1}^0=p_0(R+n)$
and $\gamma_{J+1}^{\ell(\gamma_{J+1})}=p_{\ell(b)-R+n}(R+n)$. 
Let $t^i = \overline{\gamma_{J+1}^i(R, R+n)}$. 
By construction, $t^i$ and $t^{i+1}$ are subpaths of $p_m$ and $p_{m+1}$
respectively for some $m$, 
and $(p_m, p_{m+1})$ is accepted by $\mathcal M$. 
Let $T' = \left\{t^0, t^1, \ldots , t^{\ell(\gamma_{J+1})} \right\}$.
If consecutive $n$ paths in $T'$ are mutually disjoint, 
then they give $n$-tracks of length $n$. 
Note that if some of $T'$ intersect each other,
this gives branches in the union of $T'$ by the above claim.

Let $y$ be the number of branches in the union of the paths
$t^0, t^1, \ldots , t^{\ell(\gamma_{J+1})}$. 
(See thick curves in Fig.~\ref{fig:ntrack2}.)

\begin{figure}[htbp]
  \centering

\begin{picture}(0,0)%
\includegraphics{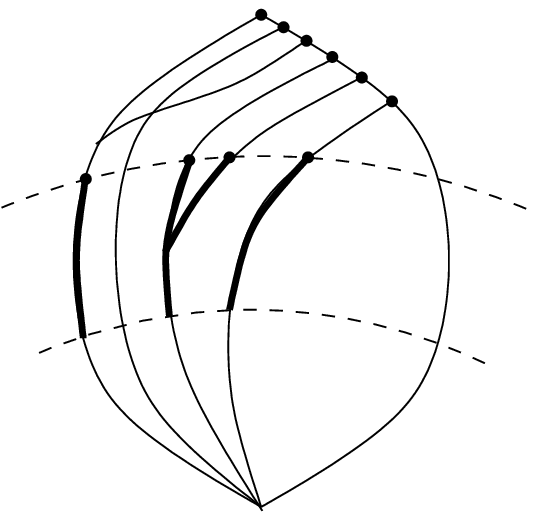}%
\end{picture}%
\setlength{\unitlength}{4144sp}%
\begingroup\makeatletter\ifx\SetFigFont\undefined%
\gdef\SetFigFont#1#2#3#4#5{%
  \reset@font\fontsize{#1}{#2pt}%
  \fontfamily{#3}\fontseries{#4}\fontshape{#5}%
  \selectfont}%
\fi\endgroup%
\begin{picture}(2471,2696)(156,-1626)
\put(2612,-109){\makebox(0,0)[lb]{\smash{{\SetFigFont{10}{12.0}{\familydefault}{\mddefault}{\updefault}{\color[rgb]{0,0,0}$R+n$}%
}}}}
\put(2439,-829){\makebox(0,0)[lb]{\smash{{\SetFigFont{10}{12.0}{\familydefault}{\mddefault}{\updefault}{\color[rgb]{0,0,0}$R$}%
}}}}
\put(1351,-1571){\makebox(0,0)[b]{\smash{{\SetFigFont{10}{12.0}{\familydefault}{\mddefault}{\updefault}{\color[rgb]{0,0,0}$e$}%
}}}}
\put(895,-211){\makebox(0,0)[rb]{\smash{{\SetFigFont{10}{12.0}{\familydefault}{\mddefault}{\updefault}{\color[rgb]{0,0,0}$t^1$}%
}}}}
\put(1026,-210){\makebox(0,0)[lb]{\smash{{\SetFigFont{10}{12.0}{\familydefault}{\mddefault}{\updefault}{\color[rgb]{0,0,0}$t^2$}%
}}}}
\put(1329,-234){\makebox(0,0)[lb]{\smash{{\SetFigFont{10}{12.0}{\familydefault}{\mddefault}{\updefault}{\color[rgb]{0,0,0}$t^3$}%
}}}}
\put(470,-246){\makebox(0,0)[rb]{\smash{{\SetFigFont{10}{12.0}{\familydefault}{\mddefault}{\updefault}{\color[rgb]{0,0,0}$t^0$}%
}}}}
\put(1357,935){\makebox(0,0)[b]{\smash{{\SetFigFont{10}{12.0}{\familydefault}{\mddefault}{\updefault}{\color[rgb]{0,0,0}$p_0$}%
}}}}
\put(1564,876){\makebox(0,0)[b]{\smash{{\SetFigFont{10}{12.0}{\familydefault}{\mddefault}{\updefault}{\color[rgb]{0,0,0}$p_1$}%
}}}}
\put(1706,806){\makebox(0,0)[b]{\smash{{\SetFigFont{10}{12.0}{\familydefault}{\mddefault}{\updefault}{\color[rgb]{0,0,0}$p_2$}%
}}}}
\put(1841,726){\makebox(0,0)[b]{\smash{{\SetFigFont{10}{12.0}{\familydefault}{\mddefault}{\updefault}{\color[rgb]{0,0,0}$p_3$}%
}}}}
\put(2020,613){\makebox(0,0)[b]{\smash{{\SetFigFont{10}{12.0}{\familydefault}{\mddefault}{\updefault}{\color[rgb]{0,0,0}$p_4$}%
}}}}
\put(2156,499){\makebox(0,0)[b]{\smash{{\SetFigFont{10}{12.0}{\familydefault}{\mddefault}{\updefault}{\color[rgb]{0,0,0}$p_5$}%
}}}}
\end{picture}%

  \caption{Finding $n$-track of length $n$.}
  \label{fig:ntrack2}
\end{figure}

We claim that $y \leq n$.
To see this, let $\widehat{\gamma_{J+1}}$ be the image of $\gamma_{J+1}$
by the natural projection $\pi$ from $\Lambda_{J+1}$ to $\Lambda_{J}$, 
that is, $\pi(\overline{\gamma_{J+1}^i(R+n)}) = \overline{\gamma_{J+1}^i(R)}$. 
We have $\ell(\widehat{\gamma_{J+1}}) \leq \ell(\gamma_{J+1}) - y$. 
Since there are $n$ endpoints $\overline{p_{\ell(b)-R}(R)}$, 
$\overline{p_{\ell(b)-(R+1)}(R+1)}$, $\ldots$,
$\overline{p_{\ell(b)-(R+n-1)}(R+n-1)}$ of $p_i$'s
between $\Lambda_J$ and $\Lambda_{J+1}$,
and $\gamma_{J}$ is the shortest in $\Lambda_{J}$, 
we have $\ell(\gamma_{J}) \leq \ell(\widehat{\gamma_{J+1}}) + n$. 
Since $\gamma_J$ is the longest in
$\left\{ \gamma_0, \gamma_1, \gamma_2, \ldots \right\}$,
it follows that $\ell({\gamma_{J+1}}) \leq \ell(\gamma_J)$. 
Therefore 
$\ell(\widehat{\gamma_{J+1}}) + y \leq \ell(\gamma_{J+1})
\leq \ell(\gamma_{J}) \leq \ell(\widehat{\gamma_{J+1}}) + n$, 
thus we have $y \leq n$ and the claim is proved.

Recall that we set $M = 2k(n+1)^2$ at the beginning of this proof,
where $k$ is a fellow traveler's constant.
Hence we have $\ell(\gamma_J) \geq \ell(\gamma_0) > M/2k = (n+1)^2$.
Since there are at most $n$ branches in 
$T'$ and the number of elements in $T'$ is $(n+1)^2$,
there exist consecutive $n$ paths in $T'$ with the desired property,
and the theorem is proved. 
\end{proof}

\section{CAT(0) cube complexes}
\label{sec:cube}

Does the existence of $n$-track of length $n$ for any $n$
imply the existence of $\ZZ + \ZZ$ subgroup?
We do not have the complete answer.
But, as an application of the theorem in the previous section,
we give a partial answer to this question
for the groups acting on CAT(0) cube complexes.

See \cite{MR2605177}, for CAT(0) and its relation to hyperbolicity.
See also \cite{MR2949207}, \cite{MR2821442}.

\subsection{The automatic structure for groups acting on CAT(0) cube complex}
\label{subsec:NR}

In this subsection, we briefly review 
the automatic structures given by 
Niblo and Reeves \cite{MR1604899}.

An $n$-cube is a copy of $[-1,1]^n$.
A cube complex is obtained from a collection of cubes
of various dimensions by identifying certain subcubes.
A flag complex is a simplicial complex
with the property that every finite set of pairwise
adjacent vertices spans a simplex.
Let $X$ be a cube complex.
The link of a vertex $v$ in $X$ is a complex built from simplices
corresponding to the corners of cubes adjacent to $v$.

\begin{definition}
A cube complex $X$ is nonpositively curved
if, for each vertex $v$ in $X$, $\text{link}(v)$ is a flag complex.
\end{definition}

Gromov \cite{MR919829} showed that a cube complex is CAT(0)
if and only if it is simply connected and nonpositively curved.
Many groups studied in combinatorial group theory act
properly and cocompactly on CAT(0) cube complexes.

Let us recall the definition of hyperplane for cube complex.
Our reference here is \cite{MR2377497}.
See also \cite{MR2979855}.
A midplane in a cube $[-1,1]^n$ is the subspace
obtained by restricting exactly one coordinate to $0$.
Given an edge in a cube, there is a unique midplane
which cuts the edge transversely.
A hyperplane $H$ of a cube complex $X$
is obtained by developing the midplanes in $X$,
i.e., identifying common subcubes of midplanes which cuts the same edge.
These edges are said to be dual to $H$.

This is a basic fact about hyperplane.

\begin{lemma}[Proposition 2.7 in \cite{MR1604899}]\label{lem:sep}
Every hyperplane in CAT(0) cube complex $X$ separates $X$ into exactly two components.
\end{lemma}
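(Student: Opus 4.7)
The plan is to split the proof into showing $X \setminus H$ has at least two, and at most two, connected components. I will use two combinatorial facts about hyperplanes: (i) the edges of $X$ dual to $H$ form one equivalence class under the transitive closure of the relation ``$e, e'$ are opposite edges of a common square,'' so in any single square the number of edges dual to $H$ is either $0$ or $2$ (and in the latter case the two dual edges are opposite sides); and (ii) an edge $e$ meets $H$ precisely when $e$ is dual to $H$, and then $e \cap H$ is just its midpoint, so in particular every vertex of $X$ lies in $X \setminus H$.

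For the lower bound, I would build a $\{+1,-1\}$-valued ``sign'' function on the $0$-skeleton. Fix a dual edge $e_0$ with endpoints $v_0, v_0^+$. For each vertex $v$, pick any edge path $\gamma$ in $X^{(1)}$ from $v_0$ to $v$ and set $\sigma(v) := (-1)^{n(\gamma)}$, where $n(\gamma)$ is the number of dual edges on $\gamma$. Because $X$ is simply connected (Gromov's criterion for CAT(0) cube complexes), any two such paths are joined by a cellular homotopy built from elementary square replacements, and each such move leaves $n(\gamma) \pmod 2$ unchanged by (i); hence $\sigma$ is well defined. I would then extend $\sigma$ to a locally constant map $X \setminus H \to \{+1,-1\}$ by giving each non-dual edge the common sign of its endpoints and giving each of the two ``half-edges'' of a dual edge the sign of the endpoint it contains. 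Since $\sigma(v_0) \ne \sigma(v_0^+)$, this already forces at least two components.

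For the upper bound I would show each preimage $\sigma^{-1}(\pm 1)$ is path-connected inside $X \setminus H$. Given vertices $u, v$ with $\sigma(u) = \sigma(v)$, any edge path between them in $X^{(1)}$ has an even number of dual edges, and the goal is to homotope it (rel endpoints) to a path with no dual edges, which then lies entirely in $X \setminus H$ since every non-dual edge does. I would carry this out via a disc-diagram argument: use simple connectedness of $X$ to fill a loop formed by the path and a candidate shortcut with a combinatorial disc of squares, then iteratively cancel pairs of dual crossings using the $0$-or-$2$ rule inside each square. I expect this final step to be the main technical obstacle, since a fully rigorous version essentially encodes the combinatorial convexity of hyperplanes (a geodesic in $X^{(1)}$ crosses each hyperplane at most once); Sageev-style minimal-area disc diagrams are the standard device for extracting this convexity from simple connectedness alone.
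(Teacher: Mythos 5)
There is a genuine gap. First, note for context that the paper does not prove this lemma at all: it quotes it as Proposition 2.7 of Niblo--Reeves, whose proof ultimately rests on Sageev's theorem (Theorem 4.14/4.10 material in \cite{MR1347406}) that hyperplanes in a CAT(0) cube complex do not self-intersect and that their complements have exactly two components. So the bar for your blind attempt is whether it is actually complete, and it is not. Your lower bound is essentially sound: the parity function $\sigma$ is well defined because the boundary of every square contains an even number of edges dual to $H$ --- and evenness ($0$, $2$ or $4$) is all you need there, and it is free from the definition of the dual class, whereas your stronger claim (i), that the count is $0$ or $2$ with the two dual edges opposite, is exactly the statement that $H$ does not self-intersect, which is itself one of the nontrivial facts in this circle of results and cannot simply be assumed. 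You should also say a word about points of $X \setminus H$ lying in interiors of higher-dimensional cubes, since your locally constant extension of $\sigma$ only treats edges and half-edges.

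The substantive gap is the upper bound. You never prove that $\sigma^{-1}(+1)$ and $\sigma^{-1}(-1)$ are path-connected in $X \setminus H$: you state the goal (homotope an edge path with an even number of crossings, rel endpoints, to one with no crossings), observe that it amounts to combinatorial convexity of halfspaces, and then defer it to ``Sageev-style minimal-area disc diagrams'' without carrying out the argument --- no specification of the area-reduction move when two crossings of $H$ appear, and the $0$-or-$2$ rule you propose to use inside the squares of the diagram is again the unproved no-self-intersection statement. Since connectedness of the two sides, together with non-self-intersection, is precisely the content of the cited Proposition 2.7, the proposal as written reduces the lemma to the very result it is meant to establish rather than proving it; to close the gap you would need to either carry out the minimal-area disc-diagram induction in detail or argue via CAT(0) geometry (e.g.\ convexity of $H$ and the nearest-point projection) as in the sources.
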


Each component is referred to as the halfspace associated with $H$.

Let $X$ be a CAT(0) cube complex and
consider a sequence of cubes $\{C_i\}_0^n$ in $X$,
each of dimension at least 1,
such that each cube meets its successor in a single vertex
$\tilde v_i = C_{i-1} \cap C_i$.
This sequence is called a cube-path
if $C_i$ is the cube of minimal dimension containing $\tilde v_i$ and $\tilde v_{i+1}$.
Let $\tilde v_0$ to be the vertex of $C_0$, which is diagonally opposite to $\tilde v_l$,
and $\tilde v_{n+1}$ the vertex of $C_n$, diagonally opposite to $\tilde v_n$.
$\tilde v_0$ is called the initial vertex and $\tilde v_{n+1}$ the terminal vertex.
For a cube $C\in X$, $St(C)$ is the union of all cubes
which contain $C$ as a subface (including C itself).

\begin{definition}[Definition 3.1 in \cite{MR1604899}]
A cube-path is called a normal cube-path if $C_i \cap St(C_{i-1}) = \tilde v_i$.
\end{definition}

\begin{lemma}\label{lem:unqp}
Given two vertices $\iota, \tau\in V(X)$, 
there is a unique normal cube-path from $\iota$ to $\tau$.
(Proposition 3.3 in \cite{MR1604899}).
A normal cube-path achieves the minimum length among
all cube-paths joining the endpoints 
(See remark in section 3 in \cite{MR1604899} and \cite{ReevesPHD}.)
\end{lemma}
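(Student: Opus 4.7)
The plan is three-fold: build a normal cube-path greedily from $\iota$ toward $\tau$, prove uniqueness by induction on the step number, and establish minimality via a hyperplane-counting exchange argument.

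For existence, set $\tilde v_0 = \iota$ and proceed inductively. At the current vertex $\tilde v_i$, consider those edges at $\tilde v_i$ whose dual hyperplane (well-defined by Lemma~\ref{lem:sep}) separates $\tilde v_i$ from $\tau$ and, when $i \geq 1$, which moreover do not lie in any cube of $St(C_{i-1})$. Using the flag link characterisation of nonpositively curved (hence CAT(0)) cube complexes, one shows that these admissible edges pairwise span squares and therefore collectively span a single cube $C_i$; declare $\tilde v_{i+1}$ to be the vertex of $C_i$ diagonally opposite $\tilde v_i$. Because each step strictly reduces the number of hyperplanes separating the current vertex from $\tau$, the procedure terminates at $\tau$ in finitely many steps, and the normality condition is built into the construction.

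For uniqueness, induct on $i$ to show that any normal cube-path $(C'_j)$ from $\iota$ to $\tau$ coincides with the greedy one. Once $C_0 = C'_0, \ldots, C_{i-1} = C'_{i-1}$ are identified, $C'_i$ may use only edges at $\tilde v_i$ dual to separating hyperplanes (else some hyperplane would be crossed twice) and avoiding $St(C_{i-1})$ (by normality); conversely, if an admissible edge were absent from $C'_i$, the missing direction would reappear inside $St(C'_i)$ at the next stage, contradicting normality of the continuation. Hence $C'_i$ equals the maximal greedy cube $C_i$. For minimality, any cube-path $(D_0, \ldots, D_m)$ from $\iota$ to $\tau$ must cross every separating hyperplane; since the normal path crosses each such hyperplane exactly once and groups them into the maximal compatible sets at each vertex, a standard exchange argument — regrouping the crossings of $(D_j)$ step by step into the greedy partition — yields $m \geq n$.

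The main technical obstacle will be the existence step: verifying that the admissible edges at $\tilde v_i$ genuinely span a single cube of $X$. This requires combining Lemma~\ref{lem:sep} with the flag link condition and a careful check that the normality restriction does not prevent the path from eventually reaching $\tau$. Once this is in hand, uniqueness and minimality both flow from the maximality built into the greedy rule.
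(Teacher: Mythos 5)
The paper does not actually prove Lemma~\ref{lem:unqp}: it imports it from Niblo--Reeves (Proposition 3.3 of \cite{MR1604899}) and Reeves' thesis \cite{ReevesPHD}, and your greedy construction retraces the route taken there (compare Remark~\ref{rem:NR3}). Measured as a standalone proof, however, your write-up has a genuine gap at exactly the point you flag and then defer: the claim that the admissible edges at $\tilde v_i$ span a single cube. The flag condition only promotes a family of edges that \emph{pairwise} span squares to a cube; the real work is to show that any two hyperplanes which meet $St(\tilde v_i)$ and separate $\tilde v_i$ from $\tau$ must cross each other, and this does not follow from Lemma~\ref{lem:sep} plus flagness alone --- it is precisely where Niblo--Reeves invoke their Lemma 2.14 (the fact, going back to Theorem 4.14 of \cite{MR1347406}, that pairwise-crossing hyperplanes through a common star span a cube), the same input this paper uses in Lemma~\ref{lem:spancube}. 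Without it the ``greedy cube'' $C_i$ need not exist. Your extra admissibility clause (discarding edges lying in $St(C_{i-1})$) also creates an obligation you do not discharge: one must show the clause is in fact vacuous (a hyperplane meeting $St(\tilde v_i)$ and separating $\tilde v_i$ from $\tau$ cannot meet $St(C_{i-1})$, else it would already have been crossed in $C_{i-1}$), since otherwise the greedy step could stall before reaching $\tau$, or normality could fail along a higher-dimensional face of $C_i$.

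The uniqueness and minimality halves have the same character. ``Else some hyperplane would be crossed twice'' presupposes that a normal cube-path crosses only hyperplanes separating its endpoints, each exactly once --- itself a lemma requiring proof; and ``the missing direction would reappear inside $St(C'_i)$ at the next stage'' needs the statement that an uncrossed separating hyperplane meeting $St(\tilde v_i)$ still meets $St(\tilde v_{i+1})$, which again comes down to the crossing/spanning argument above. Minimality is reduced to ``a standard exchange argument'' with no exchange actually exhibited; the cited sources argue instead by induction on normal distance (the far vertex of the first cube of an arbitrary cube-path is at normal distance at least $n-1$ from $\tau$). So your plan is the right one and matches the cited proof, but every step that carries the load is asserted rather than established; filling these in essentially amounts to reproving the section~3 results of \cite{MR1604899}, which is why the paper simply cites them.
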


\begin{remark}[Remark at the end of section 3 in \cite{MR1604899}]\label{rem:NR3}
Given a vertex $\tilde v$ on a normal cube-path, which terminates at $\tau$,
the cube following $\tilde v$ is spanned by the planes
which meet $St(\tilde v)$ and separate $v$ from $\tau$.
\end{remark}

Let $X$ be a CAT(0) cube complex, and $V(X)$ its vertex set.
Let $G$ be a group acting effectively, cellularly, properly
discontinuously and cocompactly on $X$.
Let $G\backslash X$ denote the quotient
of the complex $X$ by the action of $G$.
The fundamental groupoid $\pi(G\backslash X)$ is the groupoid
whose objects are the points of $G\backslash X$ and morphisms
between points $v, v'$ are homotopy classes of paths in $G\backslash X$
beginning at $v$ and ending at $v'$.
The multiplication in $\pi(G\backslash X)$
is induced by composition of paths.

A directed cube is a cube 
with two ordered diagonally opposite vertices specified.
Let $A$ be the set of 
homotopy classes of the diagonal of all directed cubes in $G\backslash X$.
The correspondence between $A$ and directed cubes in $G\backslash X$
is one to one.
The directed cubes in $X$ can be labelled equivariantly by 
(the lifts of) $A$,
so each cube-path in X defines a word in $A^*$.
Let $\mathcal L$ be the subset of $A^*$
which corresponds to normal cube-paths.

\begin{lemma}\label{lem:as}
Let $A$ and $\mathcal L$ be as above.  Then we have:
\begin{enumerate}
\item There exists an isometry between $\pi(G\backslash X)$ with the 
  word metric given by $A$ and $V(X)$ with the metric given by
  normal cube paths.
  (Lemma 4.1 in \cite{MR1604899})
\item $\mathcal L$ is regular over $A$.
  (Proposition 5.1 in \cite{MR1604899})
\item $\mathcal L$ satisfies 1-fellow travel property. 
  (Proposition 5.2 in \cite{MR1604899})
\end{enumerate}
In particular, $({A, L})$ induces an automatic structure for 
$\pi(G\backslash X)$. (See Theorem 5.3 in \cite{MR1604899})
This structure is prefix closed, weakly geodesically automatic
with uniqueness property. ( (1)  and Lemma~\ref{lem:unqp} )
\end{lemma}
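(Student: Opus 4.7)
\medskip

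\noindent\textbf{Proof proposal.} The three numbered items of the lemma and the resulting automatic structure are all statements taken verbatim from Niblo--Reeves~\cite{MR1604899}, so there is nothing to do for them beyond pointing to the citations. The genuine content that remains is to verify the three extra properties stated at the end: prefix closedness, weak geodesic automaticity, and the uniqueness property. My plan is to deduce each of them from item (1) together with Lemma~\ref{lem:unqp}.

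For the uniqueness property, the word acceptor $W$ has $L(W)=\mathcal L$, and the natural projection to $\pi(G\backslash X)$ sends $w\in\mathcal L$ to the terminal vertex of the corresponding normal cube-path started at a fixed basepoint (using item (1) to identify the vertex set of $X$ with $\pi(G\backslash X)$). Since Lemma~\ref{lem:unqp} says that for any target vertex $\tau$ there is exactly one normal cube-path from the basepoint to $\tau$, this projection is injective, giving uniqueness. For weak geodesic automaticity, item (1) says the word length of $w\in\mathcal L$ over the alphabet $A$ equals the number of cubes in the associated normal cube-path; Lemma~\ref{lem:unqp} says this number is minimal among all cube-paths between the same endpoints, so it equals the word distance with respect to $A$, showing that every $w\in\mathcal L$ is a geodesic word.

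For prefix closedness, the key observation is that the normal condition \(C_i\cap St(C_{i-1})=\tilde v_i\) of Definition~3.1 in \cite{MR1604899} is local: it involves only the pair $(C_{i-1},C_i)$. Therefore, if $(C_0,C_1,\dots,C_n)$ is a normal cube-path from $\tilde v_0$ to $\tilde v_{n+1}$, any initial segment $(C_0,C_1,\dots,C_k)$ still satisfies the same condition for $i\leq k$ and hence is itself a normal cube-path from $\tilde v_0$ to the terminal vertex of $C_k$. Translating back through the labelling by $A$, this says that any prefix of a word in $\mathcal L$ lies again in $\mathcal L$.

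No step here is a serious obstacle: the substantive work (regularity, fellow traveling, and existence/uniqueness of normal cube-paths) has already been done by Niblo--Reeves. The only point that requires a line of argument rather than a direct appeal is the prefix closedness, and even there the locality of the normal cube-path condition makes the verification immediate.
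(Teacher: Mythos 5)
Your proposal is correct and follows the same route as the paper: items (1)--(3) and the automatic structure are taken directly from Niblo--Reeves, and the prefix closedness, weak geodesic automaticity, and uniqueness property are deduced from the isometry in (1) together with the uniqueness and minimal-length statements of Lemma~\ref{lem:unqp}. The paper leaves these last verifications implicit, and your spelled-out arguments (in particular the locality of the normal cube-path condition giving prefix closedness) are exactly the intended ones.
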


The set of states of (non-deterministic) finite-state automaton
for $\mathcal L$ is $A$. (Proposition 5.1 in \cite{MR1604899})
Thus, There is a natural map from the set of states of the word acceptor
of $\pi(G\backslash X)$ to $G\backslash X$
by taking the tail of directed cubes.

In this section, for vertices $\tilde{v}, \tilde{u}$ in $X$, 
we denote by $d(\tilde{v},\tilde{u})$ the distance
given by normal-cube paths.

Let $v$ be a vertex in $G\backslash X$.
The group $G$ is realized as a subgroupoid $\pi(G\backslash X, \{v\})$
whose object is $v$ only,
and whose morphisms are all the morphisms of $\pi(G\backslash X)$
between $v$.
It is easy to construct an automatic structure
for the group $G=\pi(G\backslash X, \{v\})$
from the automatic structure for the groupoid $\pi(G\backslash X)$.

\subsection{Standard automata}

Let $G$ be a group or groupoid with automatic structure $M =
\left( A, W, \left\{ M_{x} \right\}_{x \in A \cup \left\{ \varepsilon \right\}} \right)$,
and $k$ a fellow traveler's constant for $M$.
For later purpose, we construct an automaton $\mathcal M$ from $M$.
It is called standard automata in \cite{MR1161694}
when $G$ is a group.
(See Definition 2.3.3 in \cite{MR1161694}.)
Put
\[
S':= \left\{ ( s,t,g ) \; \vert \;
     s, t \in S_{W}, \: s, t \neq F_{W}, \: \ell(g) \leq k \right\}
\]
where $S_{W}$ is the state set of $W$ and $F_{W}$ is the failure state of $W$.
The state set $S$ of $\mathcal M$ is $S' \cup \{ \mbox{failure state } F \}$.
The initial state of $\mathcal M$ is $\left( s_{0}, s_0, id \right)$,
where $s_{0}$ is the initial state of $W$.
The transition function $\mu$ of $\mathcal M$ is:
\[
 \mu((s,t,g), (x,y)) =
\begin{cases}
\left(\mu_W(s,x),\: \mu_W(t,y),\: x^{-1}\,g\,y\right)\;
         & \mbox{ if it is in } S',\\
      F  & \mbox{ otherwise,}
\end{cases}
\]
where $\mu_{W}$ is the transition function of $W$.
Note that $\mu$ can be extended in a unique way
to a map $A^{\ast} \times A^{\ast} \to S$ and we also denote it by $\mu$.

\subsection{Groups acting on CAT(0) cube complexes}

Let $G$ be a group acting effectively,
cellularly, properly discontinuously and cocompactly 
on a CAT(0) cube complex $X$.
Let $\tilde v$ be a vertex and $H$ a hyperplane in $X$.
Let ${\mathcal H}^-$ be the halfspace associated with $H$ that does not contain $\tilde v$.
We define the distance between $\tilde v$ and $H$ as $d(\tilde v, H) = d+\frac12$,
where $d = \min \{d(\tilde v, \tilde v') | \tilde v' \in {\mathcal H}^-\} - 1$.
We denote by $N(H) \cong H\times [-1,1]$ the cubical neighborhood of $H$.

\begin{lemma}\label{lem:separate}
If $d(\tilde v, H) \geq \frac32$, there exists a hyperplane $H'$
such that $d(\tilde v,H')=\frac12$ and $H'$ separates $\tilde v$ and $H$.
\end{lemma}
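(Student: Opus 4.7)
The plan is to build the separating hyperplane $H'$ from a closest vertex of $\mathcal{H}^-$ and then check the two things that separation requires: disjointness of $H$ and $H'$, and placement of $H$ on the correct side. The candidate $H'$ will be the hyperplane dual to the first edge of a combinatorial geodesic from $\tilde v$ toward $\mathcal{H}^-$, and the disjointness will be forced by a median argument.

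\textbf{Setting up the candidate.} Edge-distance dominates the normal-cube-path metric used in the paper's definition of $d(\tilde v, H)$, so the hypothesis $d(\tilde v, H) \geq \tfrac{3}{2}$ forces every vertex of $\mathcal{H}^-$ to lie at edge-distance at least $2$ from $\tilde v$. Let $\tilde u \in \mathcal{H}^-$ realize the minimum edge-distance $n := d_{\mathrm{edge}}(\tilde v, \tilde u) \geq 2$, and fix an edge-geodesic $\tilde v = \tilde v_0, \tilde v_1, \ldots, \tilde v_n = \tilde u$. The hyperplane $H$ is crossed exactly once by this geodesic, and by minimality of $\tilde u$ in $\mathcal{H}^-$ the crossing must be the last edge $\tilde v_{n-1}\tilde v_n$: any earlier crossing at index $i$ would place $\tilde v_{i+1}$ in $\mathcal{H}^-$ at edge-distance $i+1 < n$ from $\tilde v$. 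Let $H'$ be the hyperplane dual to the first edge $\tilde v_0\tilde v_1$. Then $H'$ is a midplane of a cube containing $\tilde v$, so $d(\tilde v, H') = \tfrac{1}{2}$ in the paper's sense, and $H'$ separates $\tilde v$ from $\tilde u$ by construction.

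\textbf{Proving $H \cap H' = \emptyset$ via the median.} The main step is to show that no vertex of $\mathcal{H}^-$ lies on the same side of $H'$ as $\tilde v$. Suppose toward a contradiction that $w \in \mathcal{H}^- \cap \mathcal{H}^+_{H'}$, and form the median $m = m(\tilde v, w, \tilde u)$ in the CAT(0) cube complex. The majority rule on halfspaces, applied to $H$ (with signs $+,-,-$) and to $H'$ (with signs $+,+,-$), puts $m$ simultaneously in $\mathcal{H}^-$ and on the $\tilde v$-side of $H'$. Since $m$ lies on an edge-geodesic from $\tilde v$ to $\tilde u$, we have $d_{\mathrm{edge}}(\tilde v, m) \leq n$, while minimality of $\tilde u$ forces $d_{\mathrm{edge}}(\tilde v, m) \geq n$. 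Hence $m = \tilde u$, contradicting that $\tilde u \in \mathcal{H}^-_{H'}$. Now if $H$ and $H'$ were to cross, they would share a common $2$-cube; the four vertices of this square realize all four combinations of sides of $H$ and $H'$, so one of them lies in $\mathcal{H}^- \cap \mathcal{H}^+_{H'}$, contradicting what we just established. Therefore $H \cap H' = \emptyset$.

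\textbf{Concluding the separation and obstacles.} Since $H \cap H' = \emptyset$ and $H$ is connected, $H$ lies in a single component of $X \setminus H'$. The midpoint of the last edge $\tilde v_{n-1}\tilde v_n$ belongs to $H$, and both of its endpoints lie in $\mathcal{H}^-_{H'}$ because the geodesic crosses $H'$ only at its very first edge; hence $H \subset \mathcal{H}^-_{H'}$, which is exactly the statement that $H'$ separates $\tilde v$ from $H$. The main care point will be the interplay between the normal-cube-path distance appearing in the hypothesis and the edge-distance used in the median step, and the invocation of two standard facts about CAT(0) cube complexes—the majority-vote characterization of the median on the vertex set, and the equivalence between two hyperplanes intersecting and their sharing a $2$-cube. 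Neither is stated explicitly in the paper, so the write-up would require a brief justification or a pointer to Sageev.
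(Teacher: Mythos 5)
Your proposal is correct, but it follows a genuinely different route from the paper's. The paper proves the lemma by induction on $d(\tilde v, H)$: it picks a closest vertex $\tilde v'$ in $\mathcal H^+\cap N(H)$, uses the first cube of the normal cube-path from $\tilde v'$ back to $\tilde v$ to produce an edge whose dual hyperplane $H''$ separates $\tilde v$ from $H$ and is disjoint from $H$ (quoting Lemma 2.14 of Niblo--Reeves), observes $d(\tilde v, H'')<d(\tilde v,H)$, and then applies the induction hypothesis to $H''$, so the hyperplane at distance $\frac12$ is only reached after iterating. You instead argue directly in the $1$-skeleton: you take $H'$ dual to the first edge of an edge-geodesic from $\tilde v$ to a closest vertex of $\mathcal H^-$, use the median (majority-rule) argument to show that no vertex of $\mathcal H^-$ lies on the $\tilde v$-side of $H'$, exclude a crossing of $H$ and $H'$ via the square whose four vertices realize all four halfspace patterns, and finish by connectedness of $H$. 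Both arguments are sound; the trade-off is that the paper stays entirely within the Niblo--Reeves machinery it already cites (normal cube-paths and their properties), whereas you import two standard facts not stated in the paper --- the median-graph structure of the vertex set with the majority-vote description of the median, and the fact that two hyperplanes intersect if and only if they are dual to a common square --- which, as you note, would need a pointer to Sageev. In exchange you avoid the induction, make the choice of $H'$ completely explicit, and in fact prove slightly more, namely that every vertex of $\mathcal H^-$ lies behind $H'$. One point worth making explicit in a final write-up: $H'\neq H$, which follows from your observation that the geodesic crosses $H$ only at its last edge together with $n\geq 2$ (itself justified by your comparison of the edge metric with the normal cube-path metric).
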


We remark that $d(\tilde v,H')=\frac12$ if and only if $St(\tilde v)\cap H'\neq\emptyset$.

\begin{proof}
We denote the halfspace associated with $H$ that contains $\tilde v$ by ${\mathcal H}^+$.

We prove the lemma by induction on $d(\tilde v, H)$.
Suppose that $d(\tilde v, H) = \frac32$.
Let $\tilde v'$ be a vertex in ${{\mathcal H}^+} \cap N(H)$
such that $d(\tilde v, \tilde v') = 1$.
Let $C_1$ be the cube spanned by $\tilde v'$ and $\tilde v$. 
There exists an edge $e$ in $C_1$ adjacent to $\tilde v'$,  not contained in $N(H)$
such that the hyperplane $H'$ defined by $e$ (i.e., dual to $e$) separates $\tilde v'$ and $\tilde v$.
Then, $H'$ does not intersect $H$,
(See, for example, the proof of Lemma 2.14 in \cite{MR1604899}.)
and $H'$ separates $\tilde v$ and $H$.

Next, suppose that $d+\frac12 = d(\tilde v, H) > \frac32$.
Let $\tilde v'$ be a vertex in ${\mathcal H}^+ \cap N(H)$
such that $d(\tilde v, \tilde v') = d$.
Let $C_1, \dots, C_d$ be the normal cube-path from $\tilde v'$ to $\tilde v$.
Denote the vertex $C_1\cap C_2$ by $\tilde v_1$.
As in the base case, there exists a hyperplane $H''$ 
that separates $\tilde v_1$ and $\tilde v'$ and 
$H''$ does not intersect $H$.
Since $C_1, \dots, C_d$ is a normal cube-path, $H''$ separates $\tilde v$ and $H$,
and $d(\tilde v, H'') < d(\tilde v, H)$.
By induction, there exists a hyperplane $H'$ such that $d(\tilde v, H')=\frac12$
and $H'$ separates $\tilde v$ and $H''$. 
Clearly, $H'$ separates $\tilde v$ and $H$ and the lemma is proved.
\end{proof}

Next lemma is our key technical lemma.

\begin{lemma}\label{lem:distance}
Let $C_0, \dots, C_n$ be a normal cube-path, and 
$\tilde v_0, \dots, \tilde v_{n+1}$ the vertices of this cube path,
that is, $\tilde v_i=C_{i-1}\cap C_i$ for $i=1, \dots, n$
with $\tilde v_0$ the initial vertex and $\tilde v_{n+1}$ the terminal vertex.
Let $H$ be a hyperplane separating $\tilde v_0$ and $\tilde v_n$.
If $d(\tilde v_0, H)=d+\frac12$, then $H$ separates $\tilde v_d$ and $\tilde v_{d+1}$.
\end{lemma}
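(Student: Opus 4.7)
The plan is to induct on $d$, using Lemma~\ref{lem:separate} to simultaneously shorten the path and decrease the distance from the basepoint to $H$. Before starting, I record two observations that will be used throughout. Because a normal cube-path is a geodesic (Lemma~\ref{lem:unqp}) and because consecutive vertices $\tilde v_i, \tilde v_{i+1}$ are diagonally opposite in $C_i$, the hyperplanes crossing the path $\tilde v_0, \ldots, \tilde v_n$ are exactly the (disjoint) union of those dual to $C_0, \ldots, C_{n-1}$; in particular, any hyperplane separating $\tilde v_0$ from $\tilde v_n$ is dual to exactly one $C_i$ with $0 \le i \le n-1$. The goal is to show that this unique index equals $d$.

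For the base case $d=0$, the hypothesis $d(\tilde v_0,H)=\tfrac12$ means $H$ meets $St(\tilde v_0)$. By the above, $H$ does not cross $C_n$, so $\tilde v_n$ and $\tilde v_{n+1}$ lie on the same side of $H$, whence $H$ also separates $\tilde v_0$ from $\tilde v_{n+1}$. Remark~\ref{rem:NR3} identifies $C_0$ with the cube spanned by precisely those hyperplanes that meet $St(\tilde v_0)$ and separate $\tilde v_0$ from $\tilde v_{n+1}$, so $H$ is dual to $C_0$.

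For the inductive step $d\ge 1$, apply Lemma~\ref{lem:separate} to produce a hyperplane $H'$ with $d(\tilde v_0, H')=\tfrac12$ separating $\tilde v_0$ from $H$. Since $H\cap H'=\emptyset$ and $\tilde v_n\in \mathcal H^-$, the hyperplane $H'$ in fact separates $\tilde v_0$ from every vertex of $\mathcal H^-$; applying the base case to $H'$ shows that $H'$ is dual to $C_0$, so that $\tilde v_0$ and $\tilde v_1$ lie on opposite sides of $H'$. Because $d(\tilde v_0,H)>\tfrac12$, the hyperplane $H$ is not dual to $C_0$, so $\tilde v_0$ and $\tilde v_1$ lie on the same side of $H$, and thus $H$ separates $\tilde v_1$ from $\tilde v_n$. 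The crucial numerical claim is $d(\tilde v_1,H)=(d-1)+\tfrac12$: choosing $\tilde v'\in\mathcal H^-$ realizing the minimum $d(\tilde v_0,\tilde v')=d+1$, the fact that $H'$ lies among the hyperplanes separating $\tilde v_0$ from $\tilde v'$ forces $d(\tilde v_1,\tilde v')=d$, while the triangle inequality gives $d(\tilde v_1,\mathcal H^-)\ge d$. Finally, $C_1,\ldots,C_n$ is again a normal cube-path from $\tilde v_1$ to $\tilde v_{n+1}$ (the defining condition $C_i\cap St(C_{i-1})=\tilde v_i$ is local), so the induction hypothesis applied to this sub-path and to $H$ yields that $H$ separates $\tilde v_d$ from $\tilde v_{d+1}$, completing the argument.

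The main obstacle I expect is the distance computation in the inductive step: it rests on knowing that $H'$ lies between $\tilde v_0$ and the \emph{entire} halfspace $\mathcal H^-$, not merely between $\tilde v_0$ and $H$. This uses the fact that two disjoint hyperplanes in a CAT(0) cube complex partition the vertex set into four well-defined quadrants, so $H\cap H'=\emptyset$ together with $H'$ separating $\tilde v_0$ from $H$ forces $\mathcal H^-$ to be contained in the side of $H'$ opposite $\tilde v_0$; once this is in hand, the hyperplane-counting formula for distance takes care of the rest.
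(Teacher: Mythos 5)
Your route is genuinely different from the paper's (you truncate the path at $C_0$ and induct by dropping $d$ by one, whereas the paper fixes the whole path and argues by contradiction at the vertex $\tilde v_d$), and several steps are fine: the quadrant argument showing ${\mathcal H}^-$ lies entirely beyond $H'$, the careful base case (including upgrading ``separates $\tilde v_0$ and $\tilde v_n$'' to ``separates $\tilde v_0$ and $\tilde v_{n+1}$'' before invoking Remark~\ref{rem:NR3}), and the observation that $C_1,\dots,C_n$ is again a normal cube-path. But there is a real gap exactly at the step you call crucial, the claim $d(\tilde v_1,\tilde v')=d$. Recall that in this section $d(\cdot,\cdot)$ is the \emph{normal-cube-path} distance, and $\tilde v_0,\tilde v_1$ are diagonally opposite corners of $C_0$, which may be high-dimensional: passing from $\tilde v_0$ to $\tilde v_1$ crosses every hyperplane dual to $C_0$, not just $H'$, and the ones that do not separate $\tilde v_0$ from $\tilde v'$ become newly separating and can push $\tilde v'$ farther away. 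So ``$H'$ lies among the hyperplanes separating $\tilde v_0$ from $\tilde v'$'' does not force the distance to drop. Concretely, take $X$ the standard squaring of the plane with vertex set $\ZZ^2$ (cube-path distance is the $\ell^\infty$ distance), the normal cube-path running diagonally from $\tilde v_0=(0,0)$ so that $\tilde v_1=(1,1)$, $H=\{x=k+\tfrac12\}$ (so $d=k$), $H'=\{x=\tfrac12\}$, and the minimizer $\tilde v'=(k+1,-(k+1))$: then $d(\tilde v_0,\tilde v')=k+1$ and $H'$ separates them, yet $d(\tilde v_1,\tilde v')=k+2$. What you actually need is that \emph{some} vertex of ${\mathcal H}^-$ lies within cube-path distance $d$ of $\tilde v_1$, i.e.\ $d(\tilde v_1,H)=(d-1)+\tfrac12$; this is true, but a priori it is essentially the content of the lemma (it follows once one knows $H$ is dual to $C_d$), and your argument supplies no independent proof of it.

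This is precisely the difficulty the paper's proof is built to avoid: it never recomputes distances after a truncation. Instead it assumes $H$ does not separate $\tilde v_d$ and $\tilde v_{d+1}$, deduces via Remark~\ref{rem:NR3} that $H\cap St(\tilde v_d)=\emptyset$, applies Lemma~\ref{lem:separate} at $\tilde v_d$ (not at $\tilde v_0$) to get a hyperplane $H'$ separating $\tilde v_d$ from $H$, shows $H'$ is dual to $C_d$, applies the induction hypothesis to $H'$ to conclude $d(\tilde v_0,H')=d+\tfrac12$, and then the fact that $H'$ separates $\tilde v_0$'s side from $H$ forces $d(\tilde v_0,H)>d+\tfrac12$, a contradiction. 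If you want to keep your truncation scheme, you must prove the upper bound $d(\tilde v_1,{\mathcal H}^-)\le d$ by hand (a careful choice of $\tilde v'$ plus an analysis of how the hyperplanes of $C_0$ interact with normal cube-paths into ${\mathcal H}^-$); that analysis is the real work. A minor further point: the fact that a hyperplane crosses the vertex sequence of a normal cube-path at most once is better justified by Remark~\ref{rem:NR3} (every crossing is towards the terminal vertex) than by Lemma~\ref{lem:unqp}, which only asserts minimality of cube-path length.
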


\begin{proof}
We prove the lemma by induction on $d$.

The base case $d=0$ is trivial 
as $d(\tilde v_0,H)=\frac12$ and $H$ meets $St(\tilde v)$.
Then, $H$ separates $\tilde v_0$ and $\tilde v_1$ by Remark~\ref{rem:NR3}.

Now, consider the case $d>0$.
Assume that $d(\tilde v_0,H)=d+\frac12$ and $H$ does not separate $\tilde v_d$ and $\tilde v_{d+1}$.
Since $d(\tilde v_0,H)=d+\frac12$,
$H$ can not separate $\tilde v_0$ and $\tilde v_d$.
Hence, $H$ separates $\tilde v_{d+1}$ and $\tilde v_n$.
By Remark~\ref{rem:NR3}, we see that $H\cap St(\tilde v_d)=\emptyset$.
By Lemma~\ref{lem:separate}, there exists a hyperplane $H'$
such that $H'\cap St(\tilde v_d) \neq \emptyset$ and 
$H'$ separates $\tilde v_d$ and $H$.
Then, $H'$ separates $\tilde v_d$ and $\tilde v_n$, and it follows that 
$H'$ separates $\tilde v_d$ and $\tilde v_{d+1}$. 
(See Remark~\ref{rem:NR3} again.)
By the induction hypothesis, $d(\tilde v_0, H') \geq d+\frac12$,
so we have $d(\tilde v_0, H')=d+\frac12.$
It follows that $d(\tilde v_0, H) > d(\tilde v_0, H') = d+\frac12$ and this is a contradiction.
\end{proof}

Let $\mathcal M$ be the standard automaton for the 
automatic structure of the groupoid $\pi(G\backslash X)$
given in~\ref{subsec:NR}.
We use the same symbols as in the previous subsection.
Let $(s,t,g)$ be a state in $\mathcal M$.
Since $\mathcal L$ (the set of words corresponding to normal cube-paths)
 satisfies 1-fellow travel property,
$g$ is in $A$ (the set of generators).
Recall that $A$ consists of directed cubes in $G\backslash X$.
We define the dimension the the state $(s,t,g)$,
denoted by $\dim(s,t,g)$, as the dimension of $g$ as a (directed) cube.
We also define $\dim(\text{failure state }F) = +\infty.$

\begin{lemma}\label{lem:mono}
For any transition $(s',t',g') = \mu((s,t,g), (x,y))$ in $\mathcal M$,
with $x\neq e$ and $y\neq e$,
we have $\dim(s',t',g') \geq \dim(s,t,g)$.
\end{lemma}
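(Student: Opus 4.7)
The plan is first to translate the statement into an inequality about hyperplanes of $X$. Since the 1-fellow traveller property guarantees that the two endpoints $\tilde v_1 := \overline{w_1(\ell)}$ and $\tilde v_2 := \overline{w_2(\ell)}$ associated with the state $(s,t,g)$ lie in a common cube, $\dim g$ equals the cardinality of $N := \{H : H \text{ is a hyperplane separating } \tilde v_1 \text{ from } \tilde v_2\}$. Writing $N'$ for the analogous set after reading the input $(x,y)$, the lemma becomes $|N'| \geq |N|$.

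Let $X$ (resp.\ $Y$) denote the set of hyperplanes dual to edges of the cube $x$ (resp.\ $y$). Crossing $x$ moves $\tilde v_1$ to the opposite side of each $H \in X$ and fixes its side for $H \notin X$, and similarly for $y$ and $Y$. A direct parity check then gives
\[
H \in N' \quad \Longleftrightarrow \quad \mathbf 1_X(H) + \mathbf 1_N(H) + \mathbf 1_Y(H) \equiv 1 \pmod 2,
\]
from which
\[
|N'| - |N| \;=\; \bigl|(X \triangle Y) \setminus N\bigr| \;-\; \bigl|(X \triangle Y) \cap N\bigr|.
\]
So the task reduces to proving $|(X \triangle Y) \cap N| \leq |(X \triangle Y) \setminus N|$.

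The strategy for this inequality is to exploit the normal cube-path characterization from Remark~\ref{rem:NR3}: $H \in X$ iff $H$ meets $St(\tilde v_1)$ and separates $\tilde v_1$ from $\tau_1 := \overline{w_1}$, and symmetrically for $Y$. Any $H \in (X \triangle Y) \cap N$ then imposes a strong constraint on the relative positions of $\tau_1$ and $\tau_2$ across $H$ (for instance, if $H \in X \setminus Y$ and $H \in N$, then both $\tau_1$ and $\tau_2$ must lie on the $\tilde v_2$-side of $H$ while $\tilde v_1$ lies on the opposite side). The hypothesis $x \ne e$ and $y \ne e$ ensures that both cubes are non-trivial, so $|X|,|Y| \geq 1$, and hence both $w_1$ and $w_2$ genuinely continue beyond the current step. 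Combining this with Lemma~\ref{lem:distance} applied to the remaining portions of $w_1$ and $w_2$, together with the flag condition on the links $\mathrm{lk}(\tilde v_1)$ and $\mathrm{lk}(\tilde v_2)$, I plan to assign to each $H \in (X \triangle Y) \cap N$ a distinct companion hyperplane lying in $(X \triangle Y) \setminus N$, producing the required injection. The main obstacle will be to verify injectivity of this pairing: distinct separating hyperplanes must produce distinct non-separating witnesses, and this is where the combinatorics of compatible cubes at a vertex (via the flag complex structure of the link) will do the decisive work.
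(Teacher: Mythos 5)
Your reduction is fine as bookkeeping: with $N=S(\tilde v_1,\tilde v_2)$, $X=S(\tilde v_1,\tilde v_1')$, $Y=S(\tilde v_2,\tilde v_2')$ one indeed has $N'=N\,\triangle\,(X\triangle Y)$ and hence $|N'|-|N|=|(X\triangle Y)\setminus N|-|(X\triangle Y)\cap N|$. But that identity is essentially content-free; the entire substance of the lemma is the inequality $|(X\triangle Y)\cap N|\leq|(X\triangle Y)\setminus N|$, and at exactly that point your argument stops being a proof and becomes a plan: the ``companion hyperplane'' assignment is never constructed, and you yourself flag that its injectivity is unverified. As written, this is a genuine gap, not a finished proof.

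Moreover, the pairing/injection strategy is aimed at a weaker and less natural statement than what actually holds, and a purely local argument (flag condition on $\mathrm{link}(\tilde v_1)$, $\mathrm{link}(\tilde v_2)$) cannot close it, because membership in $X$ or $Y$ is governed by global timing along the normal cube-paths, not by the links alone. The paper's route is to show $(X\triangle Y)\cap N=\emptyset$, i.e.\ the separating set is monotone nondecreasing: if $H\in N$, then $H$ separates $\iota$ from exactly one of $\tilde v_1,\tilde v_2$, so exactly one of the two normal cube-paths has already crossed $H$; a normal cube-path crosses a hyperplane at most once, and by Lemma~\ref{lem:distance} (with Remark~\ref{rem:NR3}) any hyperplane with $d(\iota,H)=d+\tfrac12$ that a normal cube-path does cross is crossed precisely between its $d$-th and $(d+1)$-st vertices. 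Since the two paths are parametrized synchronously (their endpoints $\tau_1,\tau_2$ satisfy $d(\tau_1,\tau_2)=1$ and $d(\iota,\tau_1)=d(\iota,\tau_2)$), the other path cannot cross $H$ at a strictly later step, so $H\notin X\triangle Y$. This timing argument makes the dimension literally equal to the number of hyperplanes of $S(\tau_1,\tau_2)$ at distance less than the current step, from which monotonicity is immediate; I recommend you replace the proposed injection by this argument, as the injection would in any case have to smuggle in the same use of Lemma~\ref{lem:distance}.
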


\begin{proof}
Fix $\iota\in V(X)$ as the base point of $X$.
Let $\tau_0, \tau_1$ be two points in $X$ such that
$d(\tau_0, \tau_1)=1$ and $d(\iota, \tau_0) = d(\iota, \tau_1)$.
Denote the associated vertices by 
$\iota=\tilde v_0, \ldots, \tilde v_n=\tau_0$ and 
$\iota=\tilde u_0, \ldots, \tilde u_n=\tau_1$.

Let $H$ be a hyperplane and put $d=d(\iota, H)$.
If $H$ separates $\tau_0$ and $\tau_1$, 
then, by Remark~\ref{rem:NR3} and Lemma~\ref{lem:distance}, 
$H$ does not separate $\tilde v_i$ and $\tilde u_i$ for $i=0, \dots, d$, 
and separates $\tilde v_i$ and $\tilde u_i$ for $i=d+1, \dots, n$.
If $H$ does not separate $\tau_0$ and $\tau_1$, 
but separate $\iota$ from both $\tau_0$, $\tau_1$,
then $H$ does not separate $\tilde v_i$ and $\tilde u_i$ for all $i=0, \dots, n$,
since $H$ separates $\tilde v_d$ and $\tilde v_{d+1}$ as well as $\tilde u_d$ and $\tilde u_{d+1}$.

It follows that the dimension of the cube spanned by $\tilde v_i$ and $\tilde u_i$ is 
equal to the number of hyperplanes separating $\tau_0$ and $\tau_1$
having distance to $\iota$ less than $i$.
Since this dimension is equal to the corresponding state in $\mathcal M$,
the dimensions of the states are 
monotone increasing under the transitions in $\mathcal M$.
\end{proof}

For vertices $\tilde v, \tilde v'$ in $X$, we denote the set of hyperplanes
separating $\tilde v$ and $\tilde v'$  by $S(\tilde v, \tilde v')$.
When $d(\tilde v, \tilde v')=1$, we denote by $[\tilde v, \tilde v'] (\in A)$
the label of the directed cube from $\tilde v$ to $\tilde v'$.
Since our automatic structure has uniqueness property,
there is a natural map $P: X \to S_W$,
where $S_W$ is the state set of the word acceptor for $\pi_1(G\backslash X)$.
We say that vertices $\tilde v, \tilde u, \tilde v', \tilde u'$ in $X$ 
corresponds to  a transition $(s',t',g') = \mu((s,t,g), (x,y))$ in $\mathcal M$
if $P(\tilde v)=s, P(\tilde u)=t, P(\tilde v')=s', P(\tilde u')=t',
[v,u]=g, [v,v']=x, [u,u']=y$ and $[v',u']=g'$.

\begin{lemma}\label{lem:spancube}
Suppose that the vertices $\tilde v, \tilde u, \tilde v', \tilde u'$ in $X$ 
corresponds to  a transition $(s',t',g') = \mu((s,t,g), (x,y))$ in $\mathcal M$
with $\dim(s',t',g') = \dim(s,t,g)$.
Then, $\tilde v, \tilde u, \tilde v', \tilde u'$ span a cube in $X$
such that
$S(\tilde v, \tilde v') = S(\tilde u, \tilde u')$ and
$S(\tilde v, \tilde u) = S(\tilde v', \tilde u')$
\end{lemma}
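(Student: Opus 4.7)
The plan is to prove the two set equalities first, then deduce the cube-spanning statement from them. For the equality $S(\tilde v,\tilde u)=S(\tilde v',\tilde u')$, I would specialize the hyperplane analysis in the proof of Lemma~\ref{lem:mono} to a single transition by taking $\tau_0=\tilde v'$ and $\tau_1=\tilde u'$ as the terminal vertices of the two parallel normal cube-paths emanating from the basepoint $\iota$. That analysis identifies $S(\tilde v,\tilde u)$ with $\{H\in S(\tilde v',\tilde u'):d(\iota,H)<j\}$ for the relevant step $j$, so $S(\tilde v,\tilde u)\subseteq S(\tilde v',\tilde u')$. The hypothesis $\dim(s,t,g)=\dim(s',t',g')$ equates the cardinalities of these two sets, forcing the inclusion to be an equality.

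Next, for $S(\tilde v,\tilde v')=S(\tilde u,\tilde u')$, I would run a halfspace-parity argument using the first equality. Fix $H\in S(\tilde v,\tilde v')$ and examine which of the two halfspaces of $H$ contains each of $\tilde v,\tilde u,\tilde v',\tilde u'$. Since $H$ separates $\tilde v$ and $\tilde v'$, they lie on opposite sides; the side containing $\tilde u$ is determined by whether $H\in S(\tilde v,\tilde u)$, and the side containing $\tilde u'$ by whether $H\in S(\tilde v',\tilde u')$, which equals $S(\tilde v,\tilde u)$ by the previous step. A short case check shows that in both cases $\tilde u$ and $\tilde u'$ end up on opposite sides of $H$, so $H\in S(\tilde u,\tilde u')$; the reverse inclusion is symmetric.

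Finally, for the cube-spanning statement, I would first verify that $S(\tilde v,\tilde v')\cap S(\tilde v,\tilde u)=\emptyset$: a hyperplane in the intersection would be dual to a single edge at $\tilde v$ lying in both cubes $C_x$ and $C_g$, and tracking halfspace values across such a hyperplane would force $\tilde v'$ and $\tilde u$ to coincide on enough coordinates to be inconsistent with the transition data. Granted disjointness, every hyperplane in $\mathcal{H}:=S(\tilde v,\tilde v')\cup S(\tilde v,\tilde u)$ meets $St(\tilde v)$ (it is dual to an edge of $C_x$ or $C_g$ at $\tilde v$), and pairs drawn from within $S(\tilde v,\tilde v')$ or from within $S(\tilde v,\tilde u)$ pairwise intersect because they cut $C_x$ or $C_g$ respectively. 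The remaining content is that a mixed pair $(H_x,H_g)$ intersects as well, which I would establish by appealing to the flag condition on the link of $\tilde v$: the edges $e_x,e_g$ at $\tilde v$ dual to $H_x,H_g$ become vertices in $\mathrm{link}(\tilde v)$, and the global witness $\tilde u'$, which is separated from $\tilde v$ by all of $\mathcal{H}$, should force an edge between $e_x$ and $e_g$ in the link and hence a square at $\tilde v$. Then $\mathcal{H}$ spans a cube at $\tilde v$ of dimension $|\mathcal{H}|$, which contains the four vertices as claimed. The main obstacle is this last ``mixed pair'' step: promoting the local ``meets $St(\tilde v)$'' data and the global separation by $\tilde u'$ into genuine pairwise intersection of transverse hyperplanes likely calls for an inductive or Helly-type argument on $|\mathcal{H}|$ rather than a one-shot appeal to the flag condition.
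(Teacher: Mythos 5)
Your two hyperplane-set equalities are fine and essentially match the paper's first step: the equality $S(\tilde v,\tilde u)=S(\tilde v',\tilde u')$ is exactly the specialization of the computation in Lemma~\ref{lem:mono} (which identifies $S(\tilde v,\tilde u)$ with the hyperplanes of $S(\tilde v',\tilde u')$ at small distance from $\iota$, and the dimension hypothesis forces equality), and your halfspace-parity argument then gives $S(\tilde v,\tilde v')=S(\tilde u,\tilde u')$. Your auxiliary disjointness claim $S(\tilde v,\tilde v')\cap S(\tilde v,\tilde u)=\emptyset$ is also true, but not for the reason you sketch; the clean argument is via Lemma~\ref{lem:distance}: hyperplanes in $S(\tilde v,\tilde u)$ satisfy $d(\iota,H)<d(\iota,\tilde v)$, while hyperplanes in $S(\tilde v,\tilde v')=S(\tilde u,\tilde u')$ cross the normal cube-path only after $\tilde v$, so $d(\iota,H)=d(\iota,\tilde v)+\frac{1}{2}$.

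The genuine gap is the final, and main, step: producing the cube. You reduce it to showing that a ``mixed pair'' $H_x\in S(\tilde v,\tilde v')$, $H_g\in S(\tilde v,\tilde u)$ must cross, and you propose to extract an edge between the corresponding vertices of $\mathrm{link}(\tilde v)$ from the flag condition; but flagness runs in the opposite direction: it fills in higher simplices once all edges (i.e.\ pairwise crossings, i.e.\ squares at $\tilde v$) are already present, and it can never produce an edge. So it cannot give the transversality of a mixed pair, and you yourself concede that an inductive or Helly-type argument is still missing --- which is precisely the geometric content of the lemma. The paper closes exactly this hole by quoting Lemma 2.14 of Niblo--Reeves \cite{MR1604899}: every $H\in S(\tilde v,\tilde u')\subseteq S(\tilde v,\tilde u)\cup S(\tilde u,\tilde u')$ meets $St(\tilde v)$ (being dual to the cube labelled $g$, or, via $S(\tilde u,\tilde u')=S(\tilde v,\tilde v')$, to the cube labelled $x$) and separates $\tilde v$ from the single vertex $\tilde u'$, so by that lemma the whole family spans a cube at $\tilde v$, which contains the four vertices and has the stated dual hyperplane sets. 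Either cite that lemma, as the paper does, or supply the induction on the number of hyperplanes yourself; as written, the proposal stops short of the lemma's main assertion.
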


\begin{proof}
By the argument in the proof of Lemma~\ref{lem:mono},
it is clear that $S(\tilde v, \tilde u) = S(\tilde v', \tilde u')$, and
$S(\tilde v, \tilde v') = S(\tilde u, \tilde u')$.
Since $S(\tilde v, \tilde u') = S(\tilde v, \tilde u) \cup S(\tilde u, \tilde u')$,
we have $H\cap St(\tilde v)\neq\emptyset$ for each $H\in S(\tilde v, \tilde u')$.
By Lemma 2.14 in  \cite{MR1604899}, there exists a cube that this union spans,
and the lemma is proved.
\end{proof}

\begin{lemma}\label{lem:xy}
Consider two transitions
$(s',t',g') = \mu((s,t,g), (x,y))$ and
$(s'',t'',g'') = \mu((s,t,g), (x',y'))$ in $\mathcal M$,
and suppose that
$\dim(s,t,g) = \dim(s',t',g') = \dim(s'', t'', g'')$.
Then, $x=x'$ implies $y=y'$.
Similarly, $y=y'$ implies $x=x'$.
\end{lemma}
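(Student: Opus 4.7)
My plan is to reduce the claim to geometry in $X$ via Lemma~\ref{lem:spancube}. I would fix vertices $\tilde v,\tilde u\in X$ with $P(\tilde v)=s$, $P(\tilde u)=t$ and $[\tilde v,\tilde u]=g$, and let $\tilde v',\tilde u'$ (respectively $\tilde v'',\tilde u''$) be the vertices reached by applying the generator pair $(x,y)$ (respectively $(x',y')$). These four vertices are well defined because the equivariant labelling of directed cubes by elements of $A$ means that, given a tail vertex in $X$ and a label, the directed cube — and hence its head vertex — is uniquely determined.

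Because both transitions preserve dimension, Lemma~\ref{lem:spancube} applies to each and produces a cube in $X$: one spanned by $\tilde v,\tilde u,\tilde v',\tilde u'$ with $S(\tilde v,\tilde v')=S(\tilde u,\tilde u')$, and one spanned by $\tilde v,\tilde u,\tilde v'',\tilde u''$ with $S(\tilde v,\tilde v'')=S(\tilde u,\tilde u'')$. Assuming $x=x'$, the uniqueness of the head vertex above forces $\tilde v'=\tilde v''$, and therefore $S(\tilde u,\tilde u')=S(\tilde v,\tilde v')=S(\tilde v,\tilde v'')=S(\tilde u,\tilde u'')$. At this stage I would invoke the following principle: given a vertex $\tilde u$ in a CAT(0) cube complex together with a finite collection of hyperplanes meeting $St(\tilde u)$, the diagonally opposite corner of the spanned cube is unique. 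This follows from Lemma~\ref{lem:sep} (each hyperplane separates $X$ into exactly two components, so a vertex is pinned down by knowing its side with respect to every hyperplane) combined with the spanning statement (Lemma~2.14 of \cite{MR1604899}). Hence $\tilde u'=\tilde u''$, and so $y=[\tilde u,\tilde u']=[\tilde u,\tilde u'']=y'$.

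The reverse implication $y=y'\Rightarrow x=x'$ is obtained by the same argument with the roles of the $\tilde v$-path and the $\tilde u$-path interchanged: $y=y'$ forces $\tilde u'=\tilde u''$, whence $S(\tilde v,\tilde v')=S(\tilde v,\tilde v'')$, and then $\tilde v'=\tilde v''$ by the same uniqueness principle, so $x=x'$. I do not anticipate a serious obstacle here; the geometric content is already packaged into Lemmas~\ref{lem:mono} and~\ref{lem:spancube}, and what remains is essentially the observation that the four vertices of a ``rectangle'' inside a cube are determined by any three of them together with the combinatorial type, which is nothing more than the halfspace-separation property of hyperplanes.
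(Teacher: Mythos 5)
Your argument is correct, but it runs along a different track than the paper's own proof. The paper does not invoke Lemma~\ref{lem:spancube} here at all: after fixing the vertex configurations for the two transitions (with the same $\tilde v'$, since $x=x'$), it forms the cubes $C_1$ (spanned by $\tilde v,\tilde v'$), $C_2$ (spanned by $\tilde v,\tilde u$), and the two candidate cubes $C_3$, $C_4$ spanned by the four vertices, and then uses nonpositive curvature directly: in $\mathrm{link}(v)$ the simplices of $C_3$ and $C_4$ are both spanned by the simplices of $C_1$ and $C_2$, and flagness forces $C_3=C_4$, hence $\tilde u'=\tilde u''$ and $y=y'$. You instead route everything through Lemma~\ref{lem:spancube} to get $S(\tilde u,\tilde u')=S(\tilde v,\tilde v')=S(\tilde v,\tilde v'')=S(\tilde u,\tilde u'')$, and then conclude $\tilde u'=\tilde u''$ from the halfspace principle that a vertex is determined by the set of hyperplanes separating it from a fixed vertex (Lemma~\ref{lem:sep} plus the standard fact that distinct vertices are separated by at least one hyperplane, e.g.\ one dual to the first cube of the normal cube-path between them). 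Both arguments share the same implicit assumptions (that the transitions are realized by vertex configurations and that a tail vertex together with a label in $A$ determines the head vertex, which follows from the groupoid/lifting correspondence in Lemma~\ref{lem:as}); what your version buys is that it avoids any analysis inside the link and stays entirely at the level of separation sets already established in Lemma~\ref{lem:spancube}, while the paper's version is more local and uses the flag condition — the defining feature of nonpositive curvature — in one stroke without needing the vertex-determination fact. Either way the conclusion and the symmetric case $y=y'\Rightarrow x=x'$ go through.
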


Note that the condition $\dim(s,t,g) = \dim(s',t',g') = \dim(s'', t'', g'')$ 
implies that both $(s',t',g')$ and $(s'',t'',g'')$ are not failure states,
whose dimension was defined as $+\infty$.

\begin{proof}
We will prove that $x=x'$ implies $y=y'$.
Note that we have $s' = s''$ in this case.

Suppose that vertices $\tilde v, \tilde u, \tilde v', \tilde u'$ in $X$ 
correspond to a transition $(s',t',g') = \mu((s,t,g), (x,y))$,
and vertices $\tilde v, \tilde u, \tilde v', \tilde u''$ in $X$ 
correspond to a transition $(s',t'',g') = \mu((s,t,g), (x',y'))$ in $\mathcal M$.
Let $C_1, C_2, C_3$, and $C_4$ be the cubes spanned by
$\{\tilde v, \tilde v'\}$, $\{\tilde v, \tilde u\}$,
 $\{\tilde v, \tilde v', \tilde u, \tilde u'\}$,
 and
$\{\tilde v, \tilde v', \tilde u, \tilde u''\}$,
respectively.
Then, in $link(v)$, the simplices corresponding to $C_3$ and $C_4$
are spanned by the simplices corresponding to $C_1$ and $C_2$.
Recall that $link(v)$ was a flag complex, because $X$ is nonpositively curved.
Thus, we have $C_3 = C_4$.
Hence, $u'=u''$ and we have $y = y'$. 
\end{proof}

To prove the main theorem of this section,
we need a stronger conclusion than the previous lemma.
In order to state the next lemma,
let us introduce some notation.  (See \cite{MR2377497} for more details.)

Let $\vec{a}, \vec{b}$ be oriented edges
having a common initial (or terminal) vertex $v$.
Oriented edges $\vec{a}$ and $\vec{b}$ are said to directly osculate at $v$
if they are not adjacent in $\text{link}(v)$.
Let $a, b$ be (unoriented) edges
having a common end point $v$.
Edges $a$ and $b$ are said to osculate at $v$
if they are not adjacent in $\text{link}(v)$.

We consider  hyperplanes in $G\backslash X$.
From now on, we assume that each hyperplane in $G\backslash X$ is embedding.

A hyperplane $H$ is said to be 2-sided
if its open cubical neighborhood is isomorphic to the product $H\times (-1,1)$.
If a hyperplane is not 2-sided, then it is said to be 1-sided.
If $H$ is 2-sided, one can orient dual edges in a consistent way.
A 2-sided hyperplane is said to directly self-osculate
if it is dual to distinct oriented edges that directly-osculate.
We say that 1-sided hyperplane self-osculates
if it is dual to distinct (unoriented) edges that osculate.

In this paper, we introduce the following notion:

\begin{definition}
We say that a 2-sided hyperplane $H$ self-contacts
if there are two vertices $u, v$ such that
$d(u,v)=1$ and $H$ directly self-osculates at 
$u$ and $v$.
(See Figure~\ref{fig:ws}.)
We say that a 1-sided hyperplane $H$ self-contacts
if there are two vertices $u, v$ such that
$d(u,v)=1$ and $H$ self-osculates at $u$ and $v$.
\end{definition}

\begin{figure}[htbp]
  \centering

\begin{picture}(0,0)%
\includegraphics{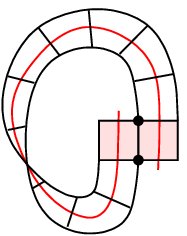}%
\end{picture}%
\setlength{\unitlength}{4144sp}%
\begingroup\makeatletter\ifx\SetFigFont\undefined%
\gdef\SetFigFont#1#2#3#4#5{%
  \reset@font\fontsize{#1}{#2pt}%
  \fontfamily{#3}\fontseries{#4}\fontshape{#5}%
  \selectfont}%
\fi\endgroup%
\begin{picture}(825,1049)(88,42)
\put(586,614){\makebox(0,0)[lb]{\smash{{\SetFigFont{10}{12.0}{\familydefault}{\mddefault}{\updefault}{\color[rgb]{0,0,0}$u$}%
}}}}
\put(766,254){\makebox(0,0)[lb]{\smash{{\SetFigFont{10}{12.0}{\familydefault}{\mddefault}{\updefault}{\color[rgb]{0,0,0}$v$}%
}}}}
\end{picture}%

  \caption{directly self-contact}
  \label{fig:ws}
\end{figure}

Let $H$ be a hyperplane in $G\backslash X$ and 
$N(H)$ the cubical neighborhood of $H$.
Let $C$ be a cube in $N(H)$ and $M$ the unique midplane of $C$
that belongs to $H$.
$M$ is unique because we are assuming that $H$ embeds in $G\backslash X$.
Let $u$ and $v$ be two vertices of $C$ not separated by $M$
such that $d(u,v) = d-1$, where $d$ is the dimension of $C$.
Then, we say that $C$ is spanned by $u$, $v$ and $H$.
If there is no hyperplanes that self-contacts,
then $C$ can be determined uniquely by $u$, $v$ and $H$.
(See Figure~\ref{fig:ws}).

\begin{remark}\label{rem:413}
By definition, if a cube complex is special in the sense of \cite{MR2377497},
then each hyperplane embeds, and it has no hyperplane of self-contact,
\end{remark}

Let $P_{s, t, g}$ be the set of pairs of letters such that 
it may appear in a word in $L({\mathcal M}_{s,t,g})$,
where ${\mathcal M}_{s, t, g}$ is the automaton with the same set of states
and transition as $\mathcal M$ but having the initial state $(s, t, g)$,
and accept states
\[
 \{(s',t',g') | \dim(g')=\dim(g)\}.
\]
In other words, $(x,y)$ is in $P_{s, t, g}$
if there exist a sequence $(x_0,y_0), \ldots, (x_{n-1}, y_{n-1}),$ $(x, y)$ 
which is accepted by ${\mathcal M}_{s, t, g}$.
(Note that $P_{s, t, g}$ depends only on the strongly connected component of $\mathcal M$.)

\begin{lemma}\label{lem:cor}
If each hyperplane in $G\backslash X$ is embedding and 
does not self-contact,
then, for each $(s, t, g)$ with $g\neq id$,
there exist two subsets $A', A''\subset A$ and
a one to one correspondence $f:A'\to A''$ such that
\[
  P_{s, t, g} = \{ (x,y)\in A'\times A'' | f(x)=y \}
\]
\end{lemma}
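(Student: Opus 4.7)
My plan is to realize $P_{s,t,g}$ geometrically in $X$ and show that $y$ is rigidly determined by $x$ (and vice versa) via the cube-spanning structure of Lemma~\ref{lem:spancube}, with the no-self-contact hypothesis providing the needed uniqueness in $G\backslash X$.

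First I would reduce to a same-dimension subautomaton. By Lemma~\ref{lem:mono}, any transition with both coordinates in $A$ is dimension non-decreasing, so an accepted word of $\mathcal{M}_{s,t,g}$ must stay throughout within states of dimension exactly $\dim g$. Let $\mathcal R$ denote the set of such reachable states; then $P_{s,t,g}$ is the set of labels of transitions between members of $\mathcal R$. Realizing each $(s',t',g') \in \mathcal R$ by a pair of vertices $\tilde v', \tilde u' \in V(X)$ with $[\tilde v', \tilde u'] = g'$, iterated application of Lemma~\ref{lem:spancube} along a same-dimension path shows that the set $S(\tilde v', \tilde u')$ of separating hyperplanes in $X$ is constant along the path; its image $\bar{\mathcal{S}}$ in $G\backslash X$ is therefore an invariant attached to $\mathcal R$.

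Next, a single transition $(x,y)$ within $\mathcal R$ produces four vertices $\tilde v', \tilde u', \tilde v' \cdot x, \tilde u' \cdot y$ spanning a cube $C \subset X$ by Lemma~\ref{lem:spancube}, which projects to a cube $\bar C \subset G\backslash X$. The cube $\bar C$ contains the directed cube $x$ as a face at the vertex $\bar v := \pi(\tilde v')$; its cross-midplanes (those dual to edges in the $g'$-direction) project to hyperplanes of $\bar{\mathcal{S}}$. I claim $\bar C$ is uniquely determined by $x$ alone once we are inside $\mathcal R$: each cross-hyperplane $\bar H \in \bar{\mathcal{S}}$ must have its cubical neighborhood meeting both $\bar v$ and the adjacent vertex $\bar v \cdot x$, and the no-self-contact hypothesis forbids $\bar H$ from self-osculating at both of these adjacent vertices simultaneously. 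Hence at one of them $\bar H$ has a unique dual edge, and combined with the hyperplane-embedding hypothesis and the flag condition on $\mathrm{link}(\bar v)$, this pins down all of the cross directions, forcing uniqueness of $\bar C$.

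Once $\bar C$ is uniquely determined by $x$, the face $y$ parallel to $x$ at the opposite corner of $\bar C$ is read off directly, giving a well-defined map $f : x \mapsto y$; the symmetric argument with the roles of $\tilde v'$ and $\tilde u'$ swapped shows that $x$ is likewise determined by $y$. Taking $A', A'' \subset A$ to be the projections of $P_{s,t,g}$ to the first and second coordinates and $f : A' \to A''$ as above, we obtain that $P_{s,t,g}$ is the graph of the bijection $f$. The main obstacle is the uniqueness of $\bar C$ from $x$: absent the no-self-contact hypothesis, a hyperplane of $\bar{\mathcal{S}}$ could self-osculate at both endpoints of an $x$-edge, yielding two distinct cubes sharing the same $x$-face and the same cross-midplanes. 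Carefully verifying that no-self-contact is exactly the combinatorial condition ruling this ambiguity out, and that the resulting cube cross-structure indeed propagates through the link using the flag condition, is the central technical point.
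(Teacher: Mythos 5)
Your proposal is correct and follows essentially the same route as the paper's proof: reduce to the constant-dimension component of $\mathcal M$, use Lemma~\ref{lem:spancube} (via Lemma~\ref{lem:mono}) to see that the set of separating hyperplanes is an invariant, and then use embedding plus no-self-contact to conclude that the cube containing the $x$-edge and crossing those hyperplanes is unique, so that $y$ is read off as the opposite face (and symmetrically $x$ from $y$), making $P_{s,t,g}$ the graph of a bijection. The only cosmetic difference is that the paper pins down the big cube by first noting uniqueness of each sub-cube $C_k$ spanned by $v_{n-1}$, $v_n$ and a single hyperplane $H_k$ and then spanning, whereas you argue the uniqueness of the full cube directly; the underlying mechanism is the same.
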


\begin{proof}
$A$ and $A'$ are determined by taking first and second projections of
$P_{s, t, g}$.

It suffices to show that if there are two sequences
$(x_1, y_1), \dots, (x_n, y_n)$ and
$(x_1', y_1'), \dots, (x_m', y_m')$
both accepted by ${\mathcal M}_{s,t,g}$,
then $x_n=x_m'$ implies $y_n=y_m'$, as well as
$y_n=y_m'$ implies $x_n=x_m'$.
In this case, we can define $f(x_n)=y_n$.
We prove the former case.

Define $(s_i, t_i, g_i)$ and $(s_j', t_j', g_j')$ inductively by
\begin{align*}
(s_i, t_i, g_i) & = \mu((s_{i-1}, t_{i-1}, g_{i-1}), (x_{i}, y_{i})), \\
(s_j', t_j', g_j') & = \mu((s_{i-j}', t_{i-j}', g_{i-j}'), (x_{j}', y_{j}'))
\end{align*}
for $i=1,\dots, n$ and $j=1,\dots,m$.
Let $v, u, v_i, u_i, v_j', u_j'$ be vertices in $G\backslash X$
corresponding to $s, t, s_i, t_i, s_j'$ and $t_j'$, respectively.
(For the correspondence between the set of states of the word acceptor
and $G\backslash X$, see the paragraph after Lemma~\ref{lem:as}.)
Put $d=\dim(s,t,g)$, and
denote the hyperplanes separating $v$ and $u$ by $H_1,\dots,H_d$.
(Recall that $S(v,u)=
S(u_i,u_i)=S(v_j',u_j')$ for all $i$ and $j$.)
Let $C_k$ be the cube 
spanned by $v_{n-1}$, $v_{n}$ and $H_k$ for $k=1,\dots,d$.
Recall that, by the assumption
(each hyperplane embeds and does not self-contact), 
each $C_k$ is uniquely determined by $v_{n-1}$, $v_{n}$ and $H_k$.
Let $C$ be the cube spanned by $v_{n-1}$, $v_{n}$, $u_{n-1}$, and $u_{n}$.
Then, $C$ is spanned by $C_1,\dots,C_d$ ($C$ contains $C_1,\dots,C_d$),
and it is uniquely determined by
$v_{n-1}, v_n$ and $S(v,u)$.
Define $C'$ in the same way.
It is uniquely determined by
$v_{n-1}', v_n'$ and $S(v,u)$.
If $x_n=x_n'$, then we have $v_{n-1} = v_{n-1}'$, $v_n=v_n'$ and $C=C'$.
Thus, $u_{n-1} = u_{n-1}'$ and $u_n=u_n'$.
Hence we have $y_n=y_m'$. 
\end{proof}

This is our main theorem in this section.

\begin{theorem}\label{thm:cube}
Let $G$ be a group acting effectively,
cellularly, properly discontinuously and cocompactly 
on a CAT(0) cube complex $X$.
If each hyperplane in $G\backslash X$ is embedding and
does not self-contact
and $G$ is not word hyperbolic,
then, $G$ contains $\ZZ+\ZZ$ subgroup.
\end{theorem}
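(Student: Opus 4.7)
The plan is to combine Theorem~\ref{thm:track} with the cube-complex geometry of Lemmas~\ref{lem:mono}--\ref{lem:cor} in order to produce two commuting elements of infinite order in $G$. First, by Lemma~\ref{lem:as} the automatic structure on $\pi(G\backslash X)$, and hence on $G$, is prefix closed, weakly geodesically automatic and has the uniqueness property, so Theorem~\ref{thm:track} applies. For $n$ chosen very large (depending on $\dim X$ and the number of states of the word acceptor and of the standard automaton $\mathcal M$) I would pick $n$-tracks $t_1,\dots,t_n$ of length $n$ together with the accompanying words $w_i,w_i'\in L(W)$ and the time $r$ supplied by Definition~\ref{def:track}.

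For each $i<n$, run $\mathcal M$ on the accepted pair $(w_i',w_{i+1})$ and record the state sequence $(s_i^\tau,u_i^\tau,g_i^\tau)$ for $\tau=r,r+1,\dots,r+n$. Since the tracks are mutually disjoint, the uniqueness property forces $g_i^\tau\neq id$, so $1\leq\dim(g_i^\tau)\leq\dim X$; Lemma~\ref{lem:mono} then tells us that $\dim(g_i^\tau)$ is monotone non-decreasing in $\tau$. A pigeonhole argument in $i$ and $\tau$ therefore isolates a sub-block of the $(i,\tau)$-grid on which $\dim(g_i^\tau)$ is constant and positive. On this block Lemma~\ref{lem:cor} supplies, at each occurring state, a bijection $f:A'\to A''$ forcing the second-coordinate letter once the first is chosen, and Lemma~\ref{lem:spancube} then fills each elementary square by an actual cube of $X$. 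Assembling these cubes exhibits the corresponding piece of $X$ as an isometrically embedded product of two intervals --- a flat strip of cubes.

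A further pigeonhole, this time on the finite product of the state sets of $W$ and of $\mathcal M$, yields indices $i_1<i_2$ and times $\tau_1<\tau_2$ inside the flat strip at which the word-acceptor states $P(\tilde v)\in S_W$ at the four corners and the compare-automaton state all repeat simultaneously. Combined with the uniqueness property, this repetition provides group elements $\alpha$, translating every track by $\tau_2-\tau_1$, and $\beta$, moving by $i_2-i_1$ across tracks. The flat-strip geometry produced by Lemma~\ref{lem:spancube} forces $\alpha\beta=\beta\alpha$, and since $\alpha$ and $\beta$ are read along geodesic subwords in a weakly geodesically automatic group, they are of infinite order provided $n$ was taken large enough that $\tau_2-\tau_1$ and $i_2-i_1$ exceed a fixed bound.

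The step I expect to be the main obstacle is confirming that $\langle\alpha,\beta\rangle$ actually has rank $2$, i.e.\ is isomorphic to $\ZZ+\ZZ$ rather than collapsing to $\ZZ$. A collapse would give a nontrivial relation $\alpha^p=\beta^q$; however, $\alpha$ is a translation along a single track while $\beta$ moves between two of the disjoint tracks of the original $n$-track, and for $n$ sufficiently large (in particular larger than the fellow-traveler constant $k$ from Lemma~\ref{lem:kfellow}) the disjointness in Definition~\ref{def:track} together with the product structure of the flat strip rules out such a relation. Once $\langle\alpha,\beta\rangle\cong\ZZ+\ZZ$ is exhibited, the theorem follows.
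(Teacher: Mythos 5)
Your first half is essentially the paper's: pass to the Niblo--Reeves structure on $\pi(G\backslash X)$, invoke Theorem~\ref{thm:track}, pigeonhole on the dimension of the compare-automaton states via Lemma~\ref{lem:mono}, and use Lemma~\ref{lem:cor} (this is exactly where the embedding/no-self-contact hypothesis enters) together with a pigeonhole on states to manufacture a ``vertical'' element $\alpha$ and a ``horizontal'' element $\beta$ that commute. The genuine gap is in how you certify infinite order. The assertion that $\alpha$ and $\beta$ are of infinite order ``since they are read along geodesic subwords'' is not an argument: torsion elements also have geodesic representatives. For $\alpha$ this can be repaired by pumping --- since the word-acceptor state repeats along the track, $\alpha^{m}$ (up to a fixed prefix) is accepted, hence geodesic, for every $m$, so $\alpha$ has infinite order --- and the paper does exactly this. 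But for $\beta$ there is no pumping: the flat strip you build is finite and certifies only finitely many powers, and nothing in your sketch excludes $\beta$ being torsion. This is where the real work of the paper lies: it first rebuilds the tracks (its ``step two'', using Lemma~\ref{lem:distance}) so that the far row of vertices is the vertex set of a \emph{normal cube-path}; it then takes $D+1$ tracks, $D$ the maximal cube dimension, transports a hyperplane $H$ dual to the first horizontal edge to hyperplanes $H_{i}$, observes that these are pairwise distinct because a hyperplane crosses a normal cube-path at most once (Remark~\ref{rem:NR3}), and uses Sageev's bound that at most $D$ hyperplanes can pairwise intersect to find two disjoint ones $H_i, H_{i'}$. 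Then $b_{i,i'}$ carries a halfspace of $H_i$ strictly inside itself, and the resulting nested chain of halfspaces gives infinite order. Your flat-strip picture contains no substitute for this hyperplane argument, and without it the choice $|I|=D+1$ and the normal-cube-path normalization --- both absent from your proposal --- have no role.

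The same mechanism is what settles the rank-two question, which you correctly flag as the main obstacle but do not resolve. Your proposed fix (disjointness of the tracks plus the product structure of the strip, with $n$ larger than the fellow-traveler constant) cannot work: a putative relation $\alpha^{p}=\beta^{q}$ involves group elements far outside the finite strip, and neither disjointness of finitely many tracks nor a bounded flat piece constrains it. The paper's argument is again via halfspaces: $\beta^{q}$ lies in $\beta^{q-1}(\mathcal H_i)\subset\mathcal H_i$, whereas the powers of $\alpha$ lie on a normal cube-path which, since $St(v_{i,j_0})\cap H_i\neq\emptyset$ and the path does not cross $H_i$ immediately, can never enter $\mathcal H_i$ (Remark~\ref{rem:NR3}); this contradiction shows $\langle\alpha,\beta\rangle$ is not cyclic and hence is $\ZZ+\ZZ$. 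A smaller but related soft spot: your claim that ``the flat-strip geometry forces $\alpha\beta=\beta\alpha$'' also needs the explicit matching of the horizontal words at the two chosen times (the paper's condition (C4), obtained by a further pigeonhole on words of bounded length), not just simultaneous state repetition. As it stands, the proposal reproduces the combinatorial setup but omits the hyperplane/halfspace machinery that actually yields an infinite-order $\beta$ and the rank-two conclusion.
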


\begin{proof}
First, note that Theorem~\ref{thm:track} works 
for the groupoid $\pi(G\backslash X)$.
By Lemma~\ref{lem:as},
Its Niblo--Reeves automatic structure described in the previous subsection
is prefix closed, weakly geodesic and has uniqueness property.
$X$ is not hyperbolic since $X$ and the Cayley graph of $G$ is quasi-isometric.
Hence, there exists an $n$-track of length $n$ in $X$ for any $n>0$.

For two vertices $\tilde v, \tilde v'$ in $X$ with $d(\tilde v, \tilde v')=1$,
we  denote by $\dim(\tilde v, \tilde v')$ the dimension of the cube spanned by $\tilde v$ and $\tilde v'$.
(Recall that we use normal cube-paths to define metric.)

Fix a vertex $\iota$ as the base point in $X$.
Let $T = \{ t_{1}, \ldots, t_{n} \}$ be an $n$-tracks of length $n$ in $X$.
We will improve $T$ in two steps.  

Step one.
The vertices in $T$ can be identified with $\{1, 2, \dots, n\} \times \{0, 1, \dots, n\}$,
where $(i,j)$ corresponds to $j$-th vertex in $t_i$,
which was denoted by $w_i(r+j)$ in the previous section.
Here, we denote this vertex by $v_{i,j}$.
We want to consider consecutive subtracks (a block) of $T$ of the form
$T' = \{i_0, i_0+1, \dots, i_0+m-1\} \times \{j_0, j_0+1, \dots, j_0+m\}$
such that
\begin{equation}\label{eqn:dim}
\dim(v_{i,j}, v_{i+1,j}) = \dim(v_{i,j+1}, v_{i+1,j+1})
\text{ for any }(i,j)\in T'.
\end{equation}
By Lemma~\ref{lem:mono}, for each $i\in\{1, \dots, n\}$, 
the number of indices $j\in\{0, \dots, n\}$ with
$\dim(v_{i,j}, v_{i+1,j}) \neq \dim(v_{i,j+1}, v_{i+1,j+1})$
is smaller than the maximal dimension of cubes in $G\backslash X$.
Thus, it is easy to see that for any $m>0$, 
there exists $n$ such that any $n$-tracks of length $n$ contains
an $m$-tracks of length $m$ that satisfies (\ref{eqn:dim}).
By abusing the notation, 
we refer to this subtracks by the same symbol $T= \{ t_{1}, \ldots, t_{n} \}$
and denote its size by $n$.
(Note that we do not assume that 
$\dim(v_{i,j}, v_{i+1,j}) = \dim(v_{i+1,j}, v_{i+2,j})$ in $T$.)
Step one is finished.

Step two.
Recall that, for two vertices $\tilde v, \tilde v'$ in $X$, 
we denote by $S(\tilde v,\tilde v')$ the set of hyperplanes separating them.
By the condition (\ref{eqn:dim}), for each $i\in\{1,\dots,n-1\}$,
there exists a set of hyperplanes 
${\mathfrak H}_i$ such that
$S(v_{i,j}, v_{i+1,j}) = {\mathfrak H}_i$ for any $j\in\{0,\dots,n\}$.
By Lemma~\ref{lem:distance}, for each hyperplane $H\in \cup_i {\mathfrak H}_i$,
we have $d(\iota, H) < d(\iota, v_{1,0})$.
%
Now, 
put $m = d(v_{1,n}, v_{n,n}) + 1$ and
let $C_1,\dots,C_{m-1}$ be the normal cube-path from $v_{1,n}$ to $v_{n,n}$.
We denote the vertices of this normal cube-path by
$v_{1}', v_{2}',\dots, v_{m}'$.
For $i\in\{1,\dots,m\}$,
let $t'_i$ be the postfix (tail) of normal cube-path from the base point $\iota$ to $v_i'$
with length $m$.
We claim that $T' = \{t_1',\dots,t_m'\}$ is a $m$-tracks of length $m$.
To see this, 
let $\mathfrak H_i'$ be the set of hyperplanes separating 
$v_{i}'$ and $v_{i+1}'$.
Then,
\[
   \bigcup_{i=1}^{m-1} {\mathfrak H}_i' = S(v_{1,n}, v_{n,n}) 
   \subset \bigcup_{i=1}^{n-1} {\mathfrak H}_i.
\]
Thus, we have 
\begin{equation}\label{eqn:d}
d(\iota, H') < d(\iota, v_{1,0})
\end{equation}
for each hyperplane $H' \in \cup_i {\mathfrak H}_i'$.
Thus, by Lemma~\ref{lem:distance}, 
we have $t_i' \cap t_j' = \emptyset$ if $i\neq j$, and
$d(\iota, v_i') = d(\iota, v_{1,n})$ for $i\in\{1,\dots,m\}$,
and $T'$ is a $m$-tracks of length $m$.
Moreover, (\ref{eqn:d}) implies that $T'$ satisfies the condition (\ref{eqn:dim}).
(And, needles to say, $v_{1}',\dots,v_{m}'$ was the vertices of a normal cube-path.)
The size $m$ can be smaller than $n$.
But, since $X$ is locally finite, if $n$ was large enough,
we may assume that $m=d(v_{1,n},v_{n,n})-1$ is as large as we want.
By abusing notation, 
we refer to this new tracks by the same symbol $T$ and 
denote its size by $n$, and vertices by $v_{i,j}$.
Step two is finished.

We denote by $t_{i,j} (\in A)$ the label of the directed edge 
from $v_{i,j}$ to $v_{i, j+1}$.

Fix $i\in\{1,\dots,n\}$, and
let us consider a pair of consecutive tracks $t_i$ and $t_{i+1}$ in $T$.
By definition,
for each pair of vertices $(v_{i,j}, v_{i+1,j})$ on $t_i$ and $t_{i+1}$,
there is a corresponding state $(s_j,t_j,g_j)\in\mathcal M$.
(It is unique by the uniqueness property of the automatic structure.)
Since $\dim(v_{i,j}, v_{i+1,j})$ is constant for all $j$,
each state $(s_j,t_j,g_j)$ can be regarded as a state 
in ${\mathcal M}_{s_0, t_0, g_0}$,
and clearly, $g_0\neq id$.
(For the definition of ${\mathcal M}_{s_0, t_0, g_0}$,
see paragraphs after Remark~\ref{rem:413}.)
By Lemma~\ref{lem:cor}, 
there exists a one to one map $f_i:A\to A$ 
such that $t_{i+1, j} = f_i(t_{i, j})$ for any $j$.
(In Lemma~\ref{lem:cor},  the map was from a subset $A'$ of $A$ to
another subset $A''$, but it is easy to extend this map from $A$ to $A$.
The extension is not unique but the complement of $A'$ will not be used anyway.)
By combining these maps,
for each $i, i'\in\{1,\dots,n\}$,
we have a one to one map
$f_{i,i'}:A \to A$ such that $t_{i', j} = f_{i,i'}(t_{i, j})$ for any $j$.

For vertices $\tilde v, \tilde v'\in X$, 
we denote the word in $A^*$ 
given by the normal cube-path from $\tilde v$ to $\tilde v'$ by $[\tilde v,\tilde v']$.

Let $D$ be the maximal dimension of the cubes in $X$.
We claim that,
if $n$ (the size of tracks) was large enough, 
$T$ contains a set of indices $I=\{i_0, i_1, \dots, i_D\}$ and
$J=\{j_0, j_1\}$ which satisfies the following conditions:
\begin{itemize}
\item[(C1)] $f_{i,i'}:A \to A$ is the identity map for each $i, i'\in I$.
\item[(C2)] $v_{i_0,j_0}$ and $v_{i_0,j_1}$ correspond 
      to the same state in the word acceptor,
\item[(C3)] $v_{i,j}$ projects to the same vertex, say $v$, 
      in $G\backslash X$ for all $i\in I, j\in J$.
\item[(C4)] $[v_{i,j_0}, v_{i',j_0}] =  [v_{i,j_1}, v_{i',j_1}]$ 
      for all $i,i'\in I$.
      Moreover, there exists a letter $\alpha\in A$ such that
      $[v_{i,j_0}, v_{i',j_0}]$ begins with $\alpha$
      for any $i<i'$.
\end{itemize}
To see the claim observe that
\begin{enumerate}
\item the number of permutations $A \to A$ is finite
\item the number of states in the word acceptor is finite
\item the number of vertices in $G\backslash X$ is finite
\item the word lengths of $[v_{i,j}, v_{i',j}]$ is less than $|i-i'|$ 
      for any $j$,
\end{enumerate}
and these numbers does not depend on the choice of $T$.
Then, it is easy to calculate the value of $n$ needed so that $T$
contains $I$ and $J$ which satisfy
the above conditions.
(At the moment, 
$D$ does not have to be he maximal dimension of the cubes in $X$.)
From now on, we assume that our $T$ is large enough so that 
it satisfies these conditions.

\begin{figure}[htbp]
  \centering

\begin{picture}(0,0)%
\includegraphics{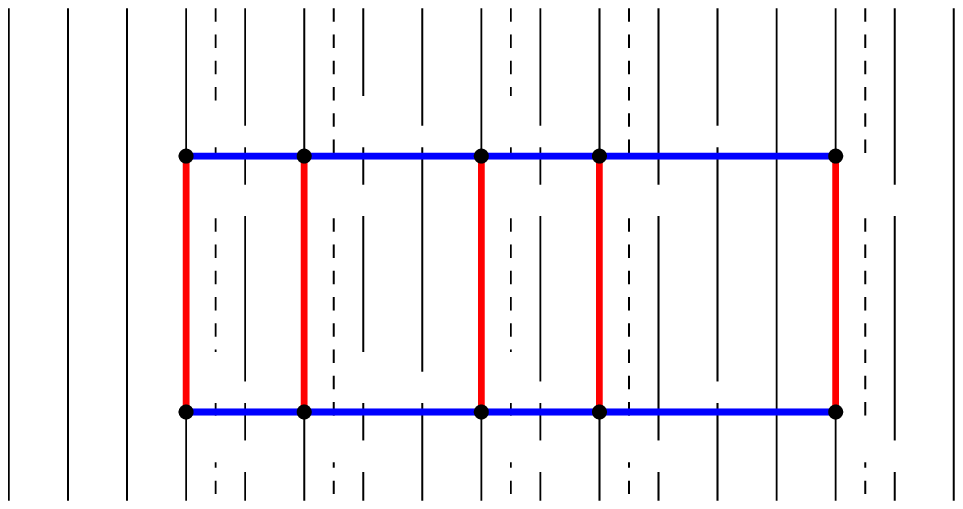}%
\end{picture}%
\setlength{\unitlength}{4144sp}%
\begingroup\makeatletter\ifx\SetFigFont\undefined%
\gdef\SetFigFont#1#2#3#4#5{%
  \reset@font\fontsize{#1}{#2pt}%
  \fontfamily{#3}\fontseries{#4}\fontshape{#5}%
  \selectfont}%
\fi\endgroup%
\begin{picture}(4707,2703)(-644,-1660)
\put(496,-511){\makebox(0,0)[rb]{\smash{{\SetFigFont{12}{14.4}{\familydefault}{\mddefault}{\updefault}{\color[rgb]{0,0,0}$a$}%
}}}}
\put(1036,-511){\makebox(0,0)[rb]{\smash{{\SetFigFont{12}{14.4}{\familydefault}{\mddefault}{\updefault}{\color[rgb]{0,0,0}$a$}%
}}}}
\put(1846,-511){\makebox(0,0)[rb]{\smash{{\SetFigFont{12}{14.4}{\familydefault}{\mddefault}{\updefault}{\color[rgb]{0,0,0}$a$}%
}}}}
\put(2386,-511){\makebox(0,0)[rb]{\smash{{\SetFigFont{12}{14.4}{\familydefault}{\mddefault}{\updefault}{\color[rgb]{0,0,0}$a$}%
}}}}
\put(3466,-511){\makebox(0,0)[rb]{\smash{{\SetFigFont{12}{14.4}{\familydefault}{\mddefault}{\updefault}{\color[rgb]{0,0,0}$a$}%
}}}}
\put(451,-1591){\makebox(0,0)[lb]{\smash{{\SetFigFont{12}{14.4}{\familydefault}{\mddefault}{\updefault}{\color[rgb]{0,0,0}$t_{i_0}$}%
}}}}
\put(1801,-1591){\makebox(0,0)[lb]{\smash{{\SetFigFont{12}{14.4}{\familydefault}{\mddefault}{\updefault}{\color[rgb]{0,0,0}$t_{i_2}$}%
}}}}
\put(2341,-1591){\makebox(0,0)[lb]{\smash{{\SetFigFont{12}{14.4}{\familydefault}{\mddefault}{\updefault}{\color[rgb]{0,0,0}$t_{i_3}$}%
}}}}
\put(3421,-1591){\makebox(0,0)[lb]{\smash{{\SetFigFont{12}{14.4}{\familydefault}{\mddefault}{\updefault}{\color[rgb]{0,0,0}$t_{i_D}$}%
}}}}
\put(991,-1591){\makebox(0,0)[lb]{\smash{{\SetFigFont{12}{14.4}{\familydefault}{\mddefault}{\updefault}{\color[rgb]{0,0,0}$t_{i_1}$}%
}}}}
\put(676,884){\makebox(0,0)[b]{\smash{{\SetFigFont{12}{14.4}{\familydefault}{\mddefault}{\updefault}{\color[rgb]{0,0,0}$H$}%
}}}}
\put(1216,884){\makebox(0,0)[b]{\smash{{\SetFigFont{12}{14.4}{\familydefault}{\mddefault}{\updefault}{\color[rgb]{0,0,0}$H_{i_1}$}%
}}}}
\put(2026,884){\makebox(0,0)[b]{\smash{{\SetFigFont{12}{14.4}{\familydefault}{\mddefault}{\updefault}{\color[rgb]{0,0,0}$H_{i_2}$}%
}}}}
\put(2566,884){\makebox(0,0)[b]{\smash{{\SetFigFont{12}{14.4}{\familydefault}{\mddefault}{\updefault}{\color[rgb]{0,0,0}$H_{i_3}$}%
}}}}
\put(3646,884){\makebox(0,0)[b]{\smash{{\SetFigFont{12}{14.4}{\familydefault}{\mddefault}{\updefault}{\color[rgb]{0,0,0}$H_{i_D}$}%
}}}}
\put(-629,-376){\makebox(0,0)[rb]{\smash{{\SetFigFont{12}{14.4}{\familydefault}{\mddefault}{\updefault}{\color[rgb]{0,0,0}$T$}%
}}}}
\put(3556,-1186){\makebox(0,0)[lb]{\smash{{\SetFigFont{12}{14.4}{\familydefault}{\mddefault}{\updefault}{\color[rgb]{0,0,0}$v_{i_D, j_0}$}%
}}}}
\put(1936,-1186){\makebox(0,0)[lb]{\smash{{\SetFigFont{12}{14.4}{\familydefault}{\mddefault}{\updefault}{\color[rgb]{0,0,0}$v_{i_2, j_0}$}%
}}}}
\put(1126,-1186){\makebox(0,0)[lb]{\smash{{\SetFigFont{12}{14.4}{\familydefault}{\mddefault}{\updefault}{\color[rgb]{0,0,0}$v_{i_1, j_0}$}%
}}}}
\put(586,-1186){\makebox(0,0)[lb]{\smash{{\SetFigFont{12}{14.4}{\familydefault}{\mddefault}{\updefault}{\color[rgb]{0,0,0}$v_{i_0, j_0}$}%
}}}}
\put(1126,-16){\makebox(0,0)[lb]{\smash{{\SetFigFont{12}{14.4}{\familydefault}{\mddefault}{\updefault}{\color[rgb]{0,0,0}$v_{i_1, j_1}$}%
}}}}
\put(1936,-16){\makebox(0,0)[lb]{\smash{{\SetFigFont{12}{14.4}{\familydefault}{\mddefault}{\updefault}{\color[rgb]{0,0,0}$v_{i_2, j_1}$}%
}}}}
\put(586,-16){\makebox(0,0)[lb]{\smash{{\SetFigFont{12}{14.4}{\familydefault}{\mddefault}{\updefault}{\color[rgb]{0,0,0}$v_{i_0, j_1}$}%
}}}}
\put(2476,-16){\makebox(0,0)[lb]{\smash{{\SetFigFont{12}{14.4}{\familydefault}{\mddefault}{\updefault}{\color[rgb]{0,0,0}$v_{i_3, j_1}$}%
}}}}
\put(3556,-16){\makebox(0,0)[lb]{\smash{{\SetFigFont{12}{14.4}{\familydefault}{\mddefault}{\updefault}{\color[rgb]{0,0,0}$v_{i_D, j_1}$}%
}}}}
\put(1486,254){\makebox(0,0)[b]{\smash{{\SetFigFont{12}{14.4}{\familydefault}{\mddefault}{\updefault}{\color[rgb]{0,0,0}$b_{i_1, i_2}$}%
}}}}
\put(2161,254){\makebox(0,0)[b]{\smash{{\SetFigFont{12}{14.4}{\familydefault}{\mddefault}{\updefault}{\color[rgb]{0,0,0}$b_{i_2, i_3}$}%
}}}}
\put(2971,254){\makebox(0,0)[b]{\smash{{\SetFigFont{12}{14.4}{\familydefault}{\mddefault}{\updefault}{\color[rgb]{0,0,0}$b_{i_3, i_D}$}%
}}}}
\put(811,254){\makebox(0,0)[b]{\smash{{\SetFigFont{12}{14.4}{\familydefault}{\mddefault}{\updefault}{\color[rgb]{0,0,0}$b_{i_0, i_1}$}%
}}}}
\put(811,-916){\makebox(0,0)[b]{\smash{{\SetFigFont{12}{14.4}{\familydefault}{\mddefault}{\updefault}{\color[rgb]{0,0,0}$b_{i_0, i_1}$}%
}}}}
\put(1486,-916){\makebox(0,0)[b]{\smash{{\SetFigFont{12}{14.4}{\familydefault}{\mddefault}{\updefault}{\color[rgb]{0,0,0}$b_{i_1, i_2}$}%
}}}}
\put(2161,-916){\makebox(0,0)[b]{\smash{{\SetFigFont{12}{14.4}{\familydefault}{\mddefault}{\updefault}{\color[rgb]{0,0,0}$b_{i_2, i_3}$}%
}}}}
\put(2971,-916){\makebox(0,0)[b]{\smash{{\SetFigFont{12}{14.4}{\familydefault}{\mddefault}{\updefault}{\color[rgb]{0,0,0}$b_{i_3, i_D}$}%
}}}}
\put(2476,-1186){\makebox(0,0)[lb]{\smash{{\SetFigFont{12}{14.4}{\familydefault}{\mddefault}{\updefault}{\color[rgb]{0,0,0}$v_{i_3, j_0}$}%
}}}}
\end{picture}%

  \caption{Finding $\ZZ+\ZZ$ subgroup: Vertical solid lines are tracks of $T$.  Dashed lines are hyperplanes.  We omit $b_{i_0, i_2}, b_{i_0, i_3},$ etc.
to simplify the picture.}
  \label{fig:t}
\end{figure}

Now, define $a = [v_{i_0,j_0}, v_{i_0,j_1}]$ 
and $b_{i,i'} = [v_{i,j_0}, v_{i',j_0}]$
($= [v_{i,j_1}, v_{i',j_1}]$)  for $i,i'\in I$.
We consider that these elements are in $\pi_1(G\backslash X, v) \simeq G$,
because of (C3).
Note that these elements connect vertices $v_{i,j}$ ($i\in I, j\in J$) 
``vertically'' and ``horizontally.''
By (C1) and (C3), we have $ab_{i,j} = b_{i,j}a$ for any $i\in I$, $j\in J$.
By (C2), $a^n$ (with possibly some prefix) is accepted by the
word acceptor for any $n>0$.
Since the automatic structure is weakly geodesic and 
has the uniqueness property.
$a$ is torsion free and $a^n$ is a normal cube-path for any $n>0$.

We claim that, at least one element of $\{b_{i,i'}\}_{i,i'\in I}$ is torsion free.
Let $H$ be a hyperplane separating $v_{i_0,j_0}$ and $v_{i_0+1,j_0}$.
(By (C4), $v_{i_0,j_0}$ and $v_{i_0+1,j_0}$ span a (directed) cube labeled $\alpha$.)
We want to consider the action
of elements in $\pi_1(G\backslash X)$ on hyperplanes.
For the sake of simplicity,
(after conjugation,) suppose that the vertex $v_{i_0,j_0}$ is the
base point of $X$ as Cayley graph of $\pi_1(G\backslash X)$.
Define $H_i$ as the image of $H$ by the action of $b_{i_0,i}$ for each $i\in I$.
By the second condition in (C4), $H_i$ 
intersects a cube with label $\alpha$ and separates $t_i$ and $t_{i+1}$.
It follows that $H_i \neq H_{i'}$ for any $i\neq i'$,
because $v_{1,n}, v_{2,n}, \dots, v_{n,n}$ are vertices of a normal cube-path
(after step two)
and a hyperplane separates a normal cube-path at most once
(Remark~\ref{rem:NR3}).

Since a family of pairwise intersecting hyperplanes 
have a common point of intersection, 
the cardinality of such a family is bounded by the maximal dimension $D$ of $X$.
(See Theorem 4.14 in \cite{MR1347406}.)
Thus, in $I=\{i_0, i_1, \dots, i_D\}$,  there exist indices $i, i'$ with $i<i'$ 
such that $H_i$ and $H_{i'}$ do not intersect,

We claim that $b_{i,i'}$ is torsion free.
For the sake of simplicity,
(after conjugation,) suppose that the vertex $v_{i,j_0}$ is the
base point of $X$ as Cayley graph of $\pi_1(G\backslash X)$.
Then, $b_{i,i'}(H_i) = H_{i'}$,
and we have $H_i \cap b_{i,i'}(H_i) = \emptyset$.
Let ${\mathcal H}$ be a halfspace of $H_i$ containing $v_{n,n}$.
By the second condition in (C4),
$b_{i,i'}({\mathcal H})$ also contains $v_{n,n}$.
Thus, $b_{i,i'}({\mathcal H_i}) \subset {\mathcal H_i}$,
Then, the orbit of $H_i$ under any positive power of $b_{i,i'}$ 
is contained in ${\mathcal H_i}$.
Hence $b_{i,i'}$ is torsion free.
Therefore, at least one element of $\{b_{i,i'}\}_{i,i'\in I}$ is torsion free
and we denote this element by $b$.

Next, we claim that $\langle a, b \rangle$ is rank two.
Let $i$ be the index used to define $b=b_{i,i'}$.
Recall that positive powers of $b$ gives 
a nested sequence of halfspaces 
${\mathcal H}_i \supset b({\mathcal H_i}) \supset b^2 ({\mathcal H_i})
\supset \cdots$.
If $\langle a, b \rangle$ is cyclic,
then there exist $p,q$ such that $a^p = b^q$.
Since $b^q \in b^{q-1} ({\mathcal H_i})$,
we have $a^p \in b^{q-1} ({\mathcal H_i}) \subset {\mathcal H}_i$.
Recall that $a^n$ is a normal cube-path for any $n>0$.
Recall also that it stayed outside of ${\mathcal H_i}$ when it was in $[v_{i,j_0}, v_{i,j_1}]$. 
But, $St(v_{i,j_0}) \cap H_i\neq\emptyset$
and this is a contradiction.
(Recall Remark~\ref{rem:NR3}.)
Thus, this subgroup is not cyclic, but is of rank two.

Hence $\langle a, b \rangle$ is $\ZZ+\ZZ$ subgroup, and the theorem is proved.
\end{proof}

Finally, we ask questions that we hope interesting.

\begin{question}
Is the condition "no hyperplane of direct self-contact" necessary?
\end{question}

\begin{question}
More systematic method is known to show that
a group acting on a space is automatic.
See \cite{MR2239447}.
Can one generalize the above result for this setting?
\end{question}


\end{document}